\newtheorem{theorem}{Theorem}[section]
\newtheorem{proposition}[theorem]{Proposition}
\newtheorem{corollary}[theorem]{Corollary}
\newtheorem{lemma}[theorem]{Lemma}
\newtheorem{definition}[theorem]{Definition}
\newtheorem{example}[theorem]{Example}
\newtheorem{remark}[theorem]{Remark}
\title{The Category of Hypergroups: seeking for a \\ generalized version of Abelian Categories}
\author[1]{Ana Luiza Tenório}
\author[2]{Kaique Matias de Andrade Roberto}
\affil[1]{Department of Mathematics, University of São Paulo, \texttt{analuiza@ime.usp.br}}
\affil[2]{Department of Mathematics, University of São Paulo, \texttt{kaique.roberto@usp.br}}
\begin{document}
\maketitle
\begin{abstract}{
 In this paper we study categorical properties of the category of (abelian) hypergroups that leads to the notion of hyper-quasipreadditive and hyper-quasiabelian categories. Our goal is to create a path towards a general theory of homological algebra for hyperalgebras.  This is a first attempt to achieve this goal. We hope to improve the definitions and results, and provide more examples soon. }

\end{abstract}

% keywords can be removed
\keywords{Hypergroups \and Homological Algebra \and Abelian Category}

\section{Introduction}

The concept of multialgebraic structure -- an ``algebraic like'' structure but endowed with  multiple valued operations -- has 
been studied since the 1930s; in particular, the concept of hypergroup was introduced by Marty in 1934 and the concept of hyperrings was introduced by Krasner in the 1950s. 

Some general algebraic studies have been conducted on multialgebras (\cite{golzio2018brief} and \cite{pelea2006multialgebras}). In particular, for multirings/hypergroups, various properties and applications have been explored. We would like to cite those that are related to our research group, including applications in abstract quadratic forms theory (\cite{marshall2006real}, \cite{worytkiewiczwitt2020witt}, \cite{roberto2021quadratic}, \cite{roberto2021ktheory}, \cite{ribeiro2021anel}), tropical geometry (\cite{jun2015algebraic}), and a more detailed discussion on variants of the concept of polynomials over hyperrings (\cite{jun2015algebraic}, \cite{ameri2019superring}). For a comprehensive overview of the foundations of hypergroup theory and numerous references to applications, we recommend the article \cite{massouros2021overview}.

In this present work, we address the basic algebraic and categorical aspects of hypergroups, with the following objectives: (1) introducing the notion of a hyper preadditive category; (2) developing a theory of (hyper) homology/cohomology over hyperstructures; and (3) establishing a theory of sheaves over hyperstructures. Our aim is to explore the entire structure of hypergroups in the most general setting possible, without attempting to obtain a group from a hypergroup.

It is important to note that we do not attempt to provide an in-depth analysis of all three aspects mentioned above. Instead, our goal is to present a broad framework in which (almost) all hyperalgebras can be inserted and studied. We also acknowledge that a similar idea was previously explored in \cite{ameri2003categories}, although our definition of hypergroups differs slightly.

The structure of the paper is as follows: We begin by presenting the definition of hypergroups we are considering, along with key examples and basic properties. In Section \ref{sec:categorical_properties}, we study the category of hypergroups, focusing on its (co)kernels, (co)products, and directed (co)limits. Subsequently, in Section \ref{sec:hyperbolic}, we narrow our focus to hyperbolic hypergroups, as their corresponding category exhibits more desirable properties (for instance, we prove that morphisms of hyperbolic hypergroups are monomorphisms precisely when they are injective).

Moving on to Section \ref{sec:hyper-abelian}, we introduce the concepts of hyper-almost-preadditive and hyper-almost-abelian categories. The category of abelian hypergroups serves as an example of such abstract categories. We also briefly discuss the behavior of injective objects and exact sequences within this context. Finally, the last section addresses the interaction between the theory we are constructing, recent developments utilizing sheaves of hyperrings, and potential avenues for future research.

\section{Hypergroups}

% \linenumbers                % mostra as linhas no pdf
The content of this section is quite standard in the literature of hypergroups (and hyperrings/hypergroups). However, at this moment there are no unified notation for these objects/concepts. We refer to \cite{massouros2021overview} in order to get an overview of the theory. 

Hypergroups are a generalization of groups. We can think that a hypergroup is a group with a multivalued operation:
\begin{definition}\label{definition:multigroupI}
 A hypergroup is a quadruple $(G,\ast,r,1)$, where $G$ is a non-empty set, $\ast:G\times G\rightarrow\mathcal 
P(G)\setminus\{\emptyset\}$\footnote{Here $\mathcal P(G)$ is the \textbf{power set} of $G$, i.e, $\mathcal P(G):=\{A:A\subseteq G\}$.} (we denote $x\ast y:=\ast(x,y)$) and $r:G\rightarrow G$
 are functions, and $1$ is an element of $G$ satisfying:
 \begin{enumerate}[i -]
  \item If $ z\in x\ast y$ then $x\in z\ast r(y)$ and $y\in r(x)\ast z$.
  \item $y\in 1\ast x$ if and only if $x=y$.
  \item With the convention $x\ast(y\ast z)=\bigcup\limits_{w\in y\ast z}x\ast w$ and 
  $(x\ast y)\ast z=\bigcup\limits_{t\in x\ast y}t\ast z$,
  $$x\ast(y\ast z)=(x\ast y)\ast z\mbox{ for all }x,y,z\in G.$$
  
	A hypergroup is said to be \textbf{commutative} if
  \item $x\ast y=y\ast x$ for all $x,y\in G$.
 \end{enumerate}
\end{definition}

Observe that by (i) and (ii), $1\ast x=x\ast 1=\{x\}$ for all $x\in G$. When $a\ast b=\{x\}$ be a unitary set, we just write 
$a\ast b=x$.

For $a\in G$, we also denote $r(a):=a^{-1}$. Combining (i) and (ii), we get for all $a\in G$ that $a\in1\ast a$, $1\in a\ast a^{-1}$ and if $1\in a\ast b$ then $b=a^{-1}$. 

From now on, we just denote $a\ast b=a\cdot b=ab$.

% Colocar exemplos
\begin{example}
Every group $(G,\cdot,1)$ is a hypergroup if we define the multi-operation by $a\ast b=\{a\cdot b\}$.
\end{example}

\begin{example}[\cite{gratzer1962representation}]
Let $G$ be a group and $H\subseteq G$ be a subgroup (not necessarily normal). Define
$$G/H:=\{aH:a\in G\}.$$
In other words, $G/H$ is the set of cosets which, in this case, does not arise from an equivalence relation in general. We denote the elements in $G/H$ simply by $[a]:=aH$. Now, for $[a],[b]\in G/H$, define
\begin{align*}
  [a]\ast[b]&=\{[d]\in G/H:\mbox{ there exist }a',b',d'\in G \\
  &\mbox{ with }[d']=[d],\,[a']=[a],\,[b']=[b]\mbox{ and }d'\in a'\ast b'\}.  
\end{align*}
Then $(G/H,\ast,[1])$ is a hypergroup which is abelian if $G$ is abelian.
\end{example}

\begin{example}[Multigroup of a Linear Order, 3.4 of \cite{viro2010hyperfields}]
Let $(\Gamma,\cdot,1,\le)$ be an ordered abelian group. We have an associated hypergroup structure $(\Gamma\cup\{0\},+,-\cdot,0,1)$ with the rules $-a:=a$, $a\cdot0=0\cdot a:=0$, the convention $0\le a$ for all $a\in\Gamma$,  and
$$a+b:=\begin{cases}a &\mbox{ if }a<b \\ b &\mbox{ if }b<a\\
[0,a] &\mbox{ if }a=b.\end{cases}$$
\end{example} 

\begin{example}[The Hypergroup $Q_2$]\label{q2}
$Q_2=\{-1,0,1\}$ is a hypergroup with the multivalued sum defined by 
relations
  $$\begin{cases}
     0+x=x+0=x,\,\mbox{for every }x\in Q_2 \\
     1+1=1,\,(-1)+(-1)=-1 \\
     1+(-1)=(-1)+1=\{-1,0,1\}.
    \end{cases}
  $$
  If we consider in $Q_2$ the usual product (in $\mathbb Z$), then we have a hyperfield.
\end{example}

 \begin{example}\label{exk}
 Let $K=\{0,1\}$ with the sum defined by relations $x+0=0+x=x$, $x\in K$ and
$1+1=\{0,1\}$. This is a hypergroup, called Krasner's hypergroup (in fact, it is also a hyperfield, and for more details the reader can consult \cite{jun2015algebraic}).
\end{example}

For hypergroups there are other examples of interest. For more details see, for instance, \cite{massouros2021overview}. For hyperrings/hypergroups and applications see for instance \cite{ribeiro2016functorial}, \cite{worytkiewiczwitt2020witt} or \cite{viro2010hyperfields}. Here are the basic properties holding in every hypergroup (for more details and proofs, see for example \cite{ribeiro2016functorial}, \cite{viro2010hyperfields}, \cite{ribeiro2021anel}).
\begin{lemma}\label{hlem1}
Let $G$ be a hypergroup and $a,b,c,d\in G$. Then:
\begin{enumerate}[i -]
    \item $(1)^{-1}=1$;
    \item $(a^{-1})^{-1}=a$;
    \item $c\in ab$ if and only if $c^{-1}\in a^{-1}b^{-1}$;
    \item $(ab)^{-1}=b^{-1}a^{-1}$.
\end{enumerate}
\end{lemma}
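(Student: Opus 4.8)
The plan is to prove the four items in the order listed, since each relies on its predecessors, and the whole argument rests on just two ingredients: axiom (i) of Definition \ref{definition:multigroupI}, and the consequences recorded immediately after it, namely $1\ast x=x\ast 1=\{x\}$, the membership $1\in a\ast a^{-1}$, and the uniqueness clause ``if $1\in a\ast b$ then $b=a^{-1}$''.

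For (i), I would set $x=1$ in $1\ast x=\{x\}$ to get $1\in 1\ast 1$, and then apply the uniqueness of inverses with $a=b=1$ to read off $1^{-1}=1$. For (ii), I would start from $1\in a\ast a^{-1}$ and apply axiom (i) with $z=1$, $x=a$, $y=a^{-1}$: its first conclusion gives $a\in 1\ast r(a^{-1})=1\ast(a^{-1})^{-1}=\{(a^{-1})^{-1}\}$, whence $a=(a^{-1})^{-1}$. Both of these are short and purely mechanical once the right instance of axiom (i) is chosen.

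The substantive step is (iii). Here the idea is to feed the membership $c\in a\ast b$ into axiom (i) repeatedly, each pass moving one factor across the product and replacing it by its inverse, with (ii) used to cancel the double inverses that appear. Concretely, from $c\in a\ast b$ axiom (i) yields $b\in a^{-1}\ast c$; feeding this back into axiom (i) gives $a^{-1}\in b\ast c^{-1}$; and one further application produces $c^{-1}\in b^{-1}\ast a^{-1}$. The converse implication is obtained by running exactly the same chain with $a,b,c$ replaced by $a^{-1},b^{-1},c^{-1}$ and simplifying via (ii), so the equivalence is genuinely symmetric and only needs to be carried out once. I expect the bookkeeping of orders to be the main obstacle: the factors come out in the reversed order $b^{-1}\ast a^{-1}$ (which is precisely the form required by item (iv) and reflects the potential non-commutativity of $G$), so the single place demanding care is making sure the correct conclusion of axiom (i) is invoked at each stage rather than accidentally commuting a product.

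Finally, (iv) is a set-level restatement of (iii). Since $x\mapsto x^{-1}$ is an involution by (ii), it is a bijection of $G$, so $(a\ast b)^{-1}=\{c^{-1}:c\in a\ast b\}$ coincides with the set of all elements lying in $b^{-1}\ast a^{-1}$; this gives $(ab)^{-1}=b^{-1}a^{-1}$ with no further work once (iii) is in hand.
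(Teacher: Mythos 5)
Your argument is correct, and since the paper states Lemma \ref{hlem1} without proof, yours is exactly the natural argument being left implicit: items (i) and (ii) from the uniqueness of inverses and one instance of axiom (i) of Definition \ref{definition:multigroupI}, item (iii) by iterating axiom (i), and item (iv) as a set-level corollary of (iii) via the involutivity from (ii). One point deserves emphasis: your chain $c\in a\ast b\Rightarrow b\in a^{-1}\ast c\Rightarrow a^{-1}\in b\ast c^{-1}\Rightarrow c^{-1}\in b^{-1}\ast a^{-1}$ proves $c^{-1}\in b^{-1}a^{-1}$, whereas item (iii) as printed asserts $c^{-1}\in a^{-1}b^{-1}$. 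For a noncommutative hypergroup (e.g.\ a noncommutative group viewed as a hypergroup) the printed version is false, and your reversed-order version is the one consistent with item (iv); so you have in effect proved the corrected statement, and the two coincide exactly in the commutative case, which is where the paper later applies it. A small bookkeeping slip: for the converse of (iii) you should instantiate the forward implication at $a\mapsto b^{-1}$, $b\mapsto a^{-1}$, $c\mapsto c^{-1}$ (swapping the roles of $a$ and $b$ as well as inverting); replacing $a,b,c$ by $a^{-1},b^{-1},c^{-1}$ verbatim returns $c\in b\ast a$ rather than $c\in a\ast b$. With that adjustment the equivalence, and hence item (iv), goes through as you describe.
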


$ $

% For a hypergroup $G$ and $a\in G$, there exists $b\in G$ (namely, $b=r(a)=a^{-1}$) with $1\in ab$. In fact, the investigation of the property "there exists a unique $b$ with $ab=\{1\}$" leads to the following characterization:

% \begin{remark}
% Let $(G,\ast,r,1)$ be a hypergroup. Are equivalent:
% \begin{enumerate}
%     \item $G$ has the \textbf{strong inversion property (SIP)}, i.e, if for all $a\in\dot G$ there exists a unique $b\in\dot G$ with $a\ast b=\{1\}$ (in this case, $b=r(a)$ and we denote $b=r(a)=a^{-1}$).
%     \item $G$ is a group.
% \end{enumerate}

% Of course (2)$\Rightarrow$(1). For (1)$\Rightarrow$(2), let $a,b\in\dot G$ and $c,d\in a\ast b$. Then
% $$c\ast d^{-1}\subseteq(a\ast b)\ast (a\ast b)^{-1}=a\ast b\ast b^{-1}\ast a^{-1}=\{1\}.$$
% Then $c\ast d^{-1}=\{1\}$, which means $c=d$. Moreover $a\ast b$ is a singleton set.
% \end{remark}

\begin{definition}
 Let $G$ and $H$ be hypergroups. A map $f:G\rightarrow H$ is a \textbf{morphism} if for all $a,b,c\in G$:
 \begin{enumerate}[i -]
  \item $c\in a\ast b\Rightarrow f(c)\in f(a)\ast f(b)$;
  \item $f(a^{-1})=(f(a))^{-1}$;
  \item $f(1)=1$.
 \end{enumerate}
 The morphism $f$ is \textbf{full} if for all $a,b\in G$,
 $f(a\ast b)=f(a)\ast f(b)$.
\end{definition}

%\clearpage

There is another description of hypergroups due to M. Marshall\footnote{This is a first-order theory with
axioms of the form $\forall\exists$.}:

\begin{definition}[\cite{marshall2006real}]\label{definition:multigroupII}
 A hypergroup is a quadruple $(G,\Pi,r,\mathfrak i)$ where $G$ is a non-empty set, $\Pi$ is a subset of 
$G\times 
G\times G$, $r:G\rightarrow G$ is a
 function and $\mathfrak{i}$ is an element of $G$ satisfying:
 \begin{enumerate}[i -]
  \item If $(x,y,z)\in\Pi$ then $(z,r(y),x)\in\Pi$ and $(r(x),z,y)\in\Pi$.
  \item $(x,\mathfrak{i},y)\in\Pi$ if and only if $x=y$.
  \item If $\exists\, p\in G$ such that $(u,v,p)\in\Pi$ and $(p,w,x)\in\Pi$ then $\exists\, q\in G$ such 
that 
$(v,w,q)\in\Pi$ and $(u,q,x)\in\Pi$.
	
	A hypergroup is said to be \textbf{commutative or abelian} if
  \item $(x,y,z)\in\Pi$ if and only if $(y,x,z)\in\Pi$.
 \end{enumerate}
\end{definition}

For multi-structures/hyper-structures, there are various sorts of ``substructure'' that one can consider. For simplicity, we will deal only with subhypergroups and full subhypergroups,  defined below. For more details, we suggest \cite{ribeiro2016functorial}, \cite{ribeiro2021anel}, \cite{golzio2018brief} or even \cite{massouros2021overview}.

\begin{definition}
 If $G,H$ are hypergroups with $H\subseteq G$, we say $H$ is \textbf{a subhypergroup (or just subgroup)} of $G$ if the inclusion $H\hookrightarrow G$ is a morphism. We say $H$ is a \textbf{full subhypergroup (or just full subgroup)} of $G$ if the inclusion $H\hookrightarrow G$ is a full morphism. Note that in the group case, all these prescriptions coincide. 
\end{definition}

Let $\varphi:G\rightarrow H$ be a morphism of hypergroups. Denote
$$\mbox{Ker}(\varphi):=\{g\in G:\varphi(g)=1\}\mbox{ and }
\mbox{Im}(\varphi)=\{h\in H:h=\varphi(g)\mbox{ for some }g\in G\}.$$
Then $\mbox{Ker}(\varphi)$ is a full subgroup of $G$ and $\mbox{Im}(\varphi)$ is a subgroup of $H$, but $\mbox{Im}(\varphi)$ (with the multi operation inherited from $H$) is not a full subgroup in general.

\begin{definition}[Generated Subgroup]
 Let $G$ be a hypergroup and $A\subseteq G$. We define the \textbf{subgroup generated by }$A$, notation $\langle A\rangle$ by
 $$\langle A\rangle:=\bigcap\{H:H\subseteq G\mbox{ is a subgroup and }A\subseteq H\}.$$
\end{definition}

Let $G$ be a hypergroup and $a\in G$. For $n\ge0$, denote $a^n:=a\cdot...\cdot a$ ($n$ times) and $a^{-n}:=(a^{-1})^n$.

\begin{proposition}\label{prop1}
 Let $G$ be a hypergroup and $a\in G$. Then
 $$\langle a\rangle=\bigcup\{a^{i_1}\cdot...\cdot a^{i_n}:i_j\in\mathbb Z,\,j=1,...,n,\,n\in\mathbb N\}.$$
\end{proposition}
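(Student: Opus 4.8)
The plan is to prove the equality by double inclusion. Write $S:=\bigcup\{a^{i_1}\cdots a^{i_n}:i_j\in\mathbb{Z},\,n\in\mathbb{N}\}$ for the right-hand side, and recall that $\langle a\rangle$ is by definition the intersection of all subgroups of $G$ containing $a$, hence the smallest such subgroup for the inclusion order. So it suffices to (1) exhibit $S$ as a subgroup of $G$ with $a\in S$, which yields $\langle a\rangle\subseteq S$, and (2) show that every subgroup $H$ with $a\in H$ already contains $S$, which yields $S\subseteq\bigcap H=\langle a\rangle$.

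First I would check that $S$ is a subgroup containing $a$. Taking $n=1$ and $i_1=1$ gives $a\in S$, and since $1\in a\ast a^{-1}$ (combining (i) and (ii) of Definition \ref{definition:multigroupI}) we also get $1\in S$. For closure under inversion, I would extend Lemma \ref{hlem1}(iv) by induction to a finite product, obtaining $(x_1\cdots x_n)^{-1}=x_n^{-1}\cdots x_1^{-1}$ and in particular $(a^{i})^{-1}=a^{-i}$; then for $x\in a^{i_1}\cdots a^{i_n}$ one gets $x^{-1}\in a^{-i_n}\cdots a^{-i_1}\subseteq S$. For closure under the hyperoperation, if $x\in a^{i_1}\cdots a^{i_n}$ and $y\in a^{j_1}\cdots a^{j_m}$, then by associativity (axiom iii) $x\ast y\subseteq (a^{i_1}\cdots a^{i_n})\ast(a^{j_1}\cdots a^{j_m})=a^{i_1}\cdots a^{i_n}a^{j_1}\cdots a^{j_m}$, which is again one of the sets in the union defining $S$; hence $x\ast y\subseteq S$. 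The hypergroup axioms for the restricted operation on $S$ are then inherited from $G$, so $S$ is a (full) subgroup containing $a$, and $\langle a\rangle\subseteq S$.

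For the reverse inclusion, let $H$ be any subgroup of $G$ with $a\in H$. Since $H$ is closed under inversion, $a^{-1}\in H$, and since $H$ is closed under the operation, an induction on $|i|$ shows $a^{i}\subseteq H$ for every $i\in\mathbb{Z}$ (with $1\in H$ covering $i=0$); iterating closure once more gives $a^{i_1}\cdots a^{i_n}\subseteq H$ for every finite exponent sequence. Thus $S\subseteq H$ for each such $H$, and therefore $S\subseteq\bigcap\{H:H\text{ is a subgroup},\,a\in H\}=\langle a\rangle$. Combining the two inclusions proves the claim.

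The routine content sits in the first inclusion; the part that needs care is the bookkeeping for the set-valued operation — namely the inductive extension of $(ab)^{-1}=b^{-1}a^{-1}$ to arbitrary finite products, and the verification that a product of subsets of a subgroup stays inside it. This last point uses that a subgroup $H$ is closed under the ambient hyperoperation in the strong sense that $x,y\in H$ implies $x\ast y\subseteq H$; this is automatic for full subgroups, and indeed $S$ is precisely the smallest full subgroup containing $a$. I expect this closure requirement, rather than any individual computation, to be the only genuinely delicate step, since for a non-full subgroup one must be careful that the products $a^{i_1}\cdots a^{i_n}$ computed in $G$ do land inside $H$.
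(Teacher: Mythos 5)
Your proof is correct and follows essentially the same route as the paper: both exhibit the right-hand side as a subgroup containing $a$ (identity via $1\in a\ast a^{-1}$, inverses via the inductive extension of Lemma \ref{hlem1}, closure via associativity of finite products) and then argue minimality against an arbitrary subgroup $H\ni a$. The closure subtlety you flag for non-full subgroups in the reverse inclusion is likewise passed over in the paper's own proof, which simply asserts $((a))\subseteq H$, so your version is if anything slightly more candid about that step.
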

\begin{proof}
Denote 
$$((a)):=\bigcup\{a^{i_1}\cdot...\cdot a^{i_n}:i_j\in\mathbb Z,\,j=1,...,n,\,n\in\mathbb N\}.$$
In fact, $1\in a\cdot a^{-1}\in((a))$ and if $x\in a^{i_1}\cdot...\cdot a^{i_n}$ then $x^{-1}\in a^{-i_n}\cdot...\cdot a^{-i_1}\in((a))$ (use item (iii) of Lemma \ref{hlem1} and induction). Finally, let $x\in a^{i_1}\cdot...\cdot a^{i_n}$ and $y\in a^{i_{n+1}}\cdot...\cdot a^{i_m}$. Then
$$xy\in (a^{i_1}\cdot...\cdot a^{i_n})(a^{i_{n+1}}\cdot...\cdot a^{i_m})=a^{i_1}\cdot...\cdot a^{i_n}\cdot a^{i_{n+1}}\cdot...\cdot a^{i_m}\subseteq((a)).$$
Then $((a))$ is a subgroup and since $a\in((a))$, we have $\langle a\rangle\subseteq((a))$. Moreover, if $H\subseteq G$ is a subgroup and $a\in H$, then $a^{-1}\in H$. Hence $((a))\subseteq H$ and $((a))\subseteq\langle a\rangle$.
\end{proof}

For abelian hypergroups, there is an easier description of $\langle a\rangle$:
\begin{proposition}
 Let $G$ be an abelian hypergroup and $a\in G$. Then
 $$\langle a\rangle:=\bigcup_{i,j\in\mathbb Z}\{a^{i}\cdot a^{-j}\}.$$
\end{proposition}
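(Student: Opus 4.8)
The plan is to leverage the description of $\langle a\rangle$ already obtained in Proposition \ref{prop1}, namely $\langle a\rangle=\bigcup\{a^{i_1}\cdots a^{i_n}:i_j\in\mathbb Z,\ n\in\mathbb N\}$, and to show that in the commutative setting every such product of powers collapses into a product of exactly two powers, one with a nonnegative exponent and one with a nonpositive exponent. Writing $U:=\bigcup_{i,j\in\mathbb Z}a^{i}a^{-j}$, the inclusion $U\subseteq\langle a\rangle$ is immediate: each $a^{i}a^{-j}$ is a product $a^{i_1}a^{i_2}$ with $i_1=i$ and $i_2=-j$, hence lies in the union of Proposition \ref{prop1}, which equals $\langle a\rangle$. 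For the reverse inclusion, by Proposition \ref{prop1} it suffices to prove that every finite product $a^{i_1}\cdots a^{i_n}$ is contained in $U$.

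To run this, I would first record the two set-level consequences of the axioms. For subsets $A,B,C\subseteq G$, writing $A\ast B=\bigcup_{x\in A,\,y\in B}x\ast y$, associativity (axiom iii) gives $(A\ast B)\ast C=A\ast(B\ast C)$ and commutativity (axiom iv) gives $A\ast B=B\ast A$. An easy induction then shows that a finite product of subsets is independent of the way its factors are bracketed and ordered. Applying this to $a^{i_1}\cdots a^{i_n}$, I would group the factors with positive exponents together and those with negative exponents together (discarding any factor $a^{0}=\{1\}$, which acts as an identity for $\ast$). Using associativity in the form $a^{p}\ast a^{p'}=a^{p+p'}$ (itself obtained by regrouping the defining products of $a^{p}$ and $a^{p'}$), the positive block collapses to a single power $a^{i}$ with $i=\sum_{i_k>0}i_k\ge 0$, and the negative block collapses to $a^{-j}$ with $j=-\sum_{i_k<0}i_k\ge 0$. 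Hence $a^{i_1}\cdots a^{i_n}=a^{i}a^{-j}\subseteq U$, which finishes the argument.

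The only real content, and the step to treat with care, is the set-level generalized commutativity and associativity justifying the reordering and collapsing. This is routine but must be argued at the level of subset products rather than elements, since the operation is multivalued: one cannot simply cancel $a\,a^{-1}$ down to $\{1\}$. Indeed $a\,a^{-1}$ is in general a proper superset of $\{1\}$, which is precisely why the final answer must retain the two separate exponents in $a^{i}a^{-j}$ and cannot be further collapsed to a single power $a^{i-j}$.
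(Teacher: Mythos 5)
Your proof is correct and takes essentially the same route as the paper's: starting from Proposition \ref{prop1}, separating the positive and negative exponents via commutativity and collapsing each block into a single power, yielding $a^{i_1}\cdots a^{i_n}=a^{\sum_{i_k>0}i_k}\cdot a^{\sum_{i_k<0}i_k}$. You are merely more explicit than the paper about the set-level associativity/commutativity needed to justify the regrouping and about why $a\cdot a^{-1}$ cannot be collapsed to $\{1\}$.
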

\begin{proof}
Let $a^{i_1}\cdot...\cdot a^{i_n}$ with $i_1,...,i_n\in\mathbb Z$. Write
$\{i_1,...,i_n\}:=\{s_1,...,s_k,-t_{k+1},...,-t_{n}\}$ with $s_1,...,s_k$ and $t_{k+1},...,t_n$ non negative integers. Then
$$a^{i_1}\cdot...\cdot a^{i_n}=a^{s_1+...+s_k}\cdot a^{-(t_{k+1}+...+t_n)}.$$
\end{proof}

Of course, in the abelian case, for $i\ge j$ we have $a^{i-j}\subseteq a^{i}\cdot a^{-j}$ but the equality does not hold in general. If $A\subseteq G$, adapting the argument used in Proposition \ref{prop1} we get
$$\langle A\rangle=
\bigcup\{a_1^{i_1}\cdot...\cdot a_n^{i_n}:a_j\in A,\,i_j\in\mathbb Z,\,j=1,...,n,\,n\in\mathbb N\}.$$

Let $(G,+,0)$ be an abelian hypergroup and $H\subseteq G$ be a full subgroup. Denote 
$$G/H:=\{a+H:a\in G\}$$
and $[a]:=a+H$, $a\in G$. Define $-[a]:=[-a]$ and
$$[a]+[b]:=\{[d]:d'\in a'+b'\mbox{ for some }[d']=[d],\,[a']=[a],\,[b']=[b]\}.$$
In fact, such prescriptions does not depend of the choice of representatives.

\begin{theorem}
Given the notations above, $(G/H,+.-,[0])$ is an abelian hypergroup. Moreover we have a morphism $\pi:G\rightarrow G/H$.
\end{theorem}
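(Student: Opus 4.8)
The plan is to reduce each hypergroup axiom for $G/H$ to the corresponding statement in $G$, isolating associativity as the only genuine difficulty. The feature of $H$ that drives everything is fullness: since the inclusion $H\hookrightarrow G$ is a full morphism, $h_1+h_2\subseteq H$ for $h_1,h_2\in H$, and as $0\in H$ this yields $H+H=H$, where for subsets I write $A+B:=\bigcup_{u\in A,\,v\in B}u+v$. Without fullness the sum of two elements of $H$ could leave $H$, and the coset combinatorics below would break down.

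First I would fix the coset combinatorics. I would show that $a\equiv b\Leftrightarrow b\in a+H$ is an equivalence relation whose classes are exactly the sets $[a]=a+H$: reflexivity is $a\in a+0=\{a\}$; symmetry follows from item (i) of Definition \ref{definition:multigroupI}, since $b\in a+h$ gives $a\in b+(-h)$ with $-h\in H$; and transitivity uses $H+H=H$. Hence $[a]=[b]\Leftrightarrow b\in a+H$. The same reversibility, through Lemma \ref{hlem1}(iii) (namely $b\in a+h$ forces $-b\in(-a)+(-h)$), shows that $-[a]:=[-a]$ does not depend on the chosen representative. Well-definedness of $+$ is automatic, as its definition already ranges over all representatives.

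The key reformulation is
\[ [a]+[b]=\pi\bigl((a+H)+(b+H)\bigr), \]
obtained by unwinding the definition: quantifying over $a'\in[a]$ and $b'\in[b]$ and then passing to classes is the same as forming the subset $(a+H)+(b+H)$ of $G$ and applying $\pi$. I would then prove that this subset is \emph{saturated}, i.e.\ a union of cosets: if $c\in a'+b'$ with $a'\in[a]$, $b'\in[b]$ and $c''\in c+H$, then $c''\in a'+(b'+H)$ and $b'+H\subseteq(b+H)+H=b+H$, so $c''$ still lies in $(a+H)+(b+H)$. Non-emptiness of $[a]+[b]$ is clear because $a+b\neq\emptyset$.

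Granting these, the axioms follow. Identity: $[0]=0+H=H$ and $H+(x+H)=(x+H)+H=x+H$ give $[0]+[x]=\{[x]\}$, which is item (ii); commutativity is inherited from $G$ by the symmetry of the reformulation. Reversibility: if $[z]\in[x]+[y]$, choose representatives with $z'\in x'+y'$; item (i) in $G$ yields $x'\in z'+(-y')$ and $y'\in(-x')+z'$, and since $[z']=[z]$, $[-y']=-[y]$, $[-x']=-[x]$, these give $[x]\in[z]+(-[y])$ and $[y]\in(-[x])+[z]$. For associativity, the main obstacle, I would use the reformulation together with saturation and $\bigl(\bigcup_i A_i\bigr)+C=\bigcup_i(A_i+C)$ to compute
\[ ([x]+[y])+[z]=\pi\bigl(((x+H)+(y+H))+(z+H)\bigr) \]
and symmetrically for $[x]+([y]+[z])$; the two sides then coincide by the subset form of associativity (item (iii)) in $G$. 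This is precisely the step where the saturation lemma, and hence fullness of $H$, is indispensable. Finally $\pi$ is a morphism: $\pi(0)=[0]$, $\pi(-a)=-[a]$ by definition, and $c\in a+b$ gives $[c]\in[a]+[b]$ immediately from the definition of $+$ on $G/H$.
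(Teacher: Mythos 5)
Your proof is correct, and it takes a genuinely different and in fact more careful route than the paper's. The paper verifies the axioms by direct representative chasing: for associativity it takes $[x]\in([a]+[b])+[c]$, picks witnesses $x'\in d'+c'$ and $d'\in a'+b'$, and applies associativity in $G$. This is terse, and it silently identifies the representative of the intermediate class $[d]$ coming from $[x]\in[d]+[c]$ with the one coming from $[d]\in[a]+[b]$; these are a priori only $H$-equivalent, and bridging them requires exactly the kind of coset absorption ($(d_2+c')+H$ reasoning) that your saturation lemma makes explicit. Your approach instead packages the whole operation as $[a]+[b]=\pi\bigl((a+H)+(b+H)\bigr)$, proves once that such subset sums are unions of cosets (using fullness via $H+H=H$), and then reduces associativity in $G/H$ to the subset form of associativity in $G$ together with $\bigl(\bigcup_i A_i\bigr)+C=\bigcup_i(A_i+C)$. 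What the paper's method buys is brevity; what yours buys is a single reusable lemma that closes the gap in the associativity step, isolates precisely where fullness of $H$ is used, and additionally covers points the paper leaves implicit (well-definedness of $-[a]$ via Lemma \ref{hlem1}, commutativity, and the verification that $\pi$ is a morphism). Both arguments establish the identity and reversibility axioms in essentially the same way, by lifting to representatives and invoking items (i) and (ii) of Definition \ref{definition:multigroupI}.
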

\begin{proof}
In fact, if $[a]\in[0]+[b]$, then $a'\in x+b'$ with $x\in H$, $[a']=[a]$, and $[b']=[b]$. Then $b'\in a'-x$,
$$a'+H\subseteq b'+x+H=b'+H\mbox{ and }b'+H\subseteq a'-x+H=a'+H.$$
Hence $[a]=[a']=[b']=[b]$. Since $0\in a-a$ we have $[0]\in[a]+[-a]=[a]-[a]$. Now, let $[x]\in([a]+[b])+[c]$. Then $[x]\in[d]+[c]$ for some $[d]\in[a]+[b]$. Moreover $x'\in d'+c'$ for some $d'\in a'+b'$ with $[x']=[x]$, $[a']=[a]$, $[b']=[b]$, $[c']=[c]$ and $[d']=[d]$. In this case, $x'\in(a'+b')+c'=a'+(b'+c')$, and then, $x'\in a'+e$ with $e'\in b'+c'$. Hence $[x]\in[a]+[e]$ with $[e]\in[b]+[c]$, and $[x]\in[a]+([b]+[c])$, which means $([a]+[b])+[c]\subseteq([a]+[b])+[c]$. With an analogous argument we conclude $([a]+[b])+[c]\subseteq([a]+[b])+[c]$.
\end{proof}

If $H\subseteq G$ is not a full subgroup, we define $G/H:=G/\langle H\rangle$, where $\langle H\rangle$ is the subgroup of $G$ generated by $H$. Note that we always have an equivalence relation defined for $a,b\in G$ by
$$a\equiv_Hb\mbox{ iff }a-b\subseteq H.$$
Note that if $a\equiv_Hb$ then $a+H=b+H$ but not necessarily the converse (even if $H$ is full).

\section{Categorical Properties of Hypergroups}\label{sec:categorical_properties}

Now we begin the investigation of categorical properties in the category of hypergroups. We start with the entire category of hypergroups (i.e, without supposing full morphisms) and gradually introduce subcategories (abelian hypergroups, abelian hypergroups with full morphisms, etc) in order to obtain stronger results.

\begin{proposition}
 The category of hypergroups has a terminal object.
\end{proposition}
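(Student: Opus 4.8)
The plan is to exhibit the trivial one-element hypergroup as the terminal object and verify the universal property directly. Let $T = \{1\}$ be the singleton set, equipped with the only possible structure: $1 \ast 1 = \{1\}$, $r(1) = 1$, and distinguished element $1$. First I would check that $(T, \ast, r, 1)$ satisfies the axioms of Definition \ref{definition:multigroupI}. Each axiom reduces to a trivial statement about the single element $1$: axiom (i) holds because $1 \in 1 \ast 1$ forces $1 \in 1 \ast r(1)$ and $1 \in r(1) \ast 1$; axiom (ii) holds since $1 \ast 1 = \{1\}$ and $1 = 1$; and associativity in (iii) is immediate because every iterated product equals $\{1\}$. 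In fact $T$ is abelian, so it lives in the subcategory of abelian hypergroups as well.

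Next I would show that $T$ receives a unique morphism from every hypergroup $G$. Since $T$ has exactly one element, there is exactly one function $f \colon G \to T$, namely the constant map $f(g) = 1$ for all $g \in G$; this already secures uniqueness for free. It then remains to verify that this constant map is a morphism in the sense of the definition preceding Definition \ref{definition:multigroupII}. Condition (iii), $f(1) = 1$, holds by definition; condition (ii) holds because $f(a^{-1}) = 1 = 1^{-1} = (f(a))^{-1}$ using $(1)^{-1} = 1$ from Lemma \ref{hlem1}; and condition (i) holds because whenever $c \in a \ast b$ we trivially have $f(c) = 1 \in \{1\} = f(a) \ast f(b)$.

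There is no genuine obstacle here: the whole argument is a routine verification, and the essential content is simply that a singleton set carries a (necessarily unique and commutative) hypergroup structure which absorbs every hypergroup via the constant map. The only point worth double-checking is that this constant map satisfies all three morphism conditions, which we have confirmed above; uniqueness is then automatic from the cardinality of the target. I would remark in passing that, exactly as in the category of groups, the same object $T$ is also initial, so it is in fact a zero object, but the proposition as stated requires only the terminal half of this claim.
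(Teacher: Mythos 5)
Your proof is correct and follows essentially the same route as the paper: exhibit the trivial hypergroup $\{1\}$, observe that the constant map is the only possible function into it, and verify the three morphism conditions. The only difference is that you also explicitly check the hypergroup axioms for the singleton, which the paper takes for granted.
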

\begin{proof}
Consider $\textbf{1} = \{1\}$ the trivial hypergroup. For any hypergroup $G$ we define a function $!: G \to \textbf{1}$ by $!(g) = 1$ for all $g \in G$. This is a morphism of hypergroups since:
\begin{enumerate}
    \item Take $c \in a * b$, for $a,b \in G$. We have $!(c) \in !(a) *!(b)$ because $1 \in 1 * 1$.
    \item $!(r(a)) = 1,$ for all $a \in G$ and $r(!(a)) = r(1) = 1$.
    \item $!(1) = 1$.
\end{enumerate}
And $!$ is the unique morphism of hypergroup with codomain $\textbf{1}$. If $\varphi: G \to \textbf{1}$ is another morphism of hypergroups, then $\varphi(g) = 1 = !(g)$ since $1$ is the unique element of \textbf{1}.
\end{proof}

\begin{proposition}
 The category of hypergroups has an initial object.
\end{proposition}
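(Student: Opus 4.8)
The plan is to show that the trivial hypergroup $\mathbf{1}=\{1\}$, which already served as the terminal object in the previous proposition, is simultaneously the initial object; in other words, it is a zero object of the category of hypergroups, exactly as the trivial group is a zero object in the category of groups.

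First I would fix an arbitrary hypergroup $G$ and pin down the only possible morphism $\iota_G:\mathbf{1}\to G$. Since every morphism of hypergroups preserves the distinguished element by axiom (iii), and since $1$ is the sole element of $\mathbf{1}$, any such $\iota_G$ is forced to satisfy $\iota_G(1)=1_G$. This observation already yields uniqueness: there is at most one morphism $\mathbf{1}\to G$, so it only remains to check that this forced assignment is indeed a morphism.

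Next I would verify the three morphism axioms for $\iota_G(1)=1_G$. Axiom (iii) holds by construction. For axiom (ii), we have $r(1)=1$ in $\mathbf{1}$ and, by Lemma \ref{hlem1}(i), $r(1_G)=1_G$ in $G$, so both $\iota_G(r(1))$ and $r(\iota_G(1))$ equal $1_G$. For axiom (i), the only instance to consider is $1\in 1\ast 1=\{1\}$, and I must check that $1_G\in 1_G\ast 1_G$, which holds since $1_G\ast 1_G=\{1_G\}$ as noted immediately after Definition \ref{definition:multigroupI}.

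There is no genuine obstacle here; the argument is entirely formal, mirroring the terminal-object proof, and the only point requiring a moment's care is invoking $(1)^{-1}=1$ from Lemma \ref{hlem1}(i) to dispatch axiom (ii). Combining existence and uniqueness shows that $\mathbf{1}$ is initial, and together with the previous proposition this lets me conclude that $\mathbf{1}$ is in fact a zero object of the category of hypergroups.
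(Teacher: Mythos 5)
Your proposal is correct and follows essentially the same route as the paper: take the trivial hypergroup $\mathbf{1}$, observe that axiom (iii) forces the unique assignment $1\mapsto 1_G$, and verify the remaining morphism axioms directly (the paper checks $r(1)=1$ by the same computation you extract from Lemma \ref{hlem1}(i)). The concluding remark that $\mathbf{1}$ is therefore a zero object matches the paper's subsequent corollary.
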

\begin{proof}
We will show that the trivial hypergroup $\textbf{1}$ is an initial object in the category of hypergroups. For any hypergroup $G$, define $\phi : \textbf{1} \to G$ by $\phi(1) = 1$. So the third condition is automatically satisfied. Since the unique element in $\{1\}$ is an identity element $1$, the other two conditions are easy to check:
\begin{enumerate}
    \item $1 \in 1 * 1 \mbox{ implies } \phi(1) \in \phi(1) * \phi(1)$.  
    \item $\phi(r(1)) = \phi(1) = 1 = r(1) = r(\phi(1))$.
\end{enumerate}
Observe that any other morphism $\psi: \textbf{1} \to G$ of hypergroups must satisfy the third condition $\psi(1) = 1$  so $\psi = \phi$. In other words, $\phi$ is unique.
\end{proof}

Since $\textbf{1}$ is initial and terminal, we conclude:
\begin{corollary}
The category of hypergroups has a zero object.
\end{corollary}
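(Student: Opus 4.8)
The plan is to unwind the definition of a zero object and then simply combine the two preceding propositions. Recall that an object $0$ in a category is a \emph{zero object} precisely when it is simultaneously initial and terminal, i.e. for every object $X$ there is a unique morphism $0 \to X$ and a unique morphism $X \to 0$. Thus the corollary will follow the moment we exhibit a single hypergroup that plays both roles.

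The key observation is that the two propositions immediately above do not merely prove the existence of an initial object and of a terminal object separately; they both single out the \emph{same} hypergroup, namely the trivial hypergroup $\textbf{1} = \{1\}$. The terminal-object proposition constructs, for each hypergroup $G$, the unique morphism $\,!\colon G \to \textbf{1}$, while the initial-object proposition constructs, for each $G$, the unique morphism $\phi\colon \textbf{1} \to G$. Since it is literally the one object $\textbf{1}$ that witnesses both universal properties, $\textbf{1}$ is at once initial and terminal, hence a zero object. So the proof is essentially a one-line appeal to the two results already in hand.

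There is no real obstacle here; the only point worth verifying explicitly is the coincidence of the initial and terminal objects, which is precisely what makes the statement a corollary rather than an independent proposition. If one wanted to be fully self-contained, I would additionally remark that the existence of a zero object means every hom-set $\mathrm{Hom}(G,H)$ contains a distinguished \emph{zero morphism}, obtained as the composite $G \xrightarrow{\,!\,} \textbf{1} \xrightarrow{\phi} H$; this is exactly the constant map sending every element to the identity $1$ of $H$, and it is the morphism that will later serve as the ambient zero for discussing kernels, cokernels, and exact sequences in the category of hypergroups.
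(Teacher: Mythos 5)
Your proposal is correct and matches the paper's argument exactly: the corollary is deduced by observing that the trivial hypergroup $\textbf{1}$ is both the initial and the terminal object established in the two preceding propositions. The additional remark about the induced zero morphism $G \to \textbf{1} \to H$ being the constant map at $1$ is accurate and consistent with how the paper later uses the zero morphism for kernels and cokernels.
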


The product in the category of hypergroups is obtained in a very similar way to the product in the category of groups: if $\{G_i\}_{i\in I}$ is a family of hypergroups, for $(a_i)_{i\in I},(b_i)_{i\in I}\in\prod_{i\in I}G_i$ we define
$$(a_i)_{i\in I}\ast(b_i)_{i\in I}=\left\lbrace(c_i)_{i\in I}\in\prod_{i\in I}G_i:c_i\in a_i\ast b_i\mbox{ for all }i\in I\right\rbrace.$$

In fact, $(\prod_{i\in I}G_i,\ast,(1)_{i\in I})$ is a hypergroup. If we take, for each $i\in I$, the morphism $\pi_i:\prod_{i\in I}G_i\rightarrow G_i$ given by the rule $\pi_i(a_i)_{i\in I}:=a_i$, we have that $\pi_i$ is a full surjective morphism and $(\prod_{i\in I}G_i,\pi_i)$ satisfies the universal property of products for the category of hypergroups. Moreover, adapting the usual construction of filtered limits in the category of groups (see for instance, \cite{ribes2000profinite} or \cite{borceux1994handbook1}) we get the filtered limits for hypergroups under a technical hypothesis.

% \begin{prop}
% Let $f:A\rightarrow B$ be a morphism of hypergroups. Then $f$ is mono iff $f$ is injective.
% \end{prop}
% \begin{proof}
% Let $f$ be a monomorphism and and 
% \end{proof}

\begin{definition}
 A hypergroup $G$ has the \textbf{weak cancellation property} if for all $a,b,c\in G$, $1\in(ba^{-1})(ac)$ imply $b=c^{-1}$.
\end{definition}

\begin{proposition}
 Let $I$ be a directed poset and $\{G_i,\varphi_{ij}, I\}$ be a projective system of hypergroups. If, for all $i\in I$, $G_i$ has the weak cancellation property then there exist the filtered limit of $\{G_i,\varphi_{ij}, I\}$.
\end{proposition}
\begin{proof}
Let $I$ be a directed poset and $\{G_i,\varphi_{ij}, I\}$ be a projective system of hypergroups. In other words, we have, for each $i\ge j$ in $I$, a morphism $\varphi_{ij}:G_i\rightarrow G_j$ such that for all $i\ge j\ge k$ the following diagram commute:
$$\xymatrix{G_i\ar[rr]^{\varphi_{ik}}\ar[dr]_{\varphi_{ij}
} & & G_k \\ & G_j\ar[ur]_{\varphi_{jk}} &}$$
Now, let
$$G:=\left\lbrace(g_i)_{i\in I}\in\prod_{i\in I}G_i:\varphi_{ij}(g_i)=g_j\mbox{ for all }i\ge j\right\rbrace$$
and for $(g_i)_{i\in I},(h_i)_{i\in I}\in G$, define
$$(g_i)_{i\in I}\ast(h_i)_{i\in I}:=\left[(g_i)_{i\in I}\ast_{\prod_{i\in I}G_i}(h_i)_{i\in I}\right]\cap G.$$
We have that $(G,\ast)$ is a subgroup of $\prod_{i\in I}G_i$ (not full in general). 
For this, first note that $(1_i)_{i\in I}\in G$ and if $(g_i)_{i\in I}\in G$, since for all $i\ge j$ we have $\varphi_{ij}(g_i)=g_j$, then for all $i\ge j$ we have $\varphi_{ij}(g_i^{-1})=g_j^{-1}$, and $(g_i^{-1})_{i\in I}\in G$. Moreover, by the very definition of $\ast$ on $G$ we get that 
$$(h_i)_{i\in I}\in(1_i)_{i\in I}\ast(g_i)_{i\in I}\mbox{ iff }g_i=1_i\mbox{ for all }i\in I.$$
Now let $(p_i)_{i\in I}\in (g_i)_{i\in I}\ast(h_i)_{i\in I}$. Then $p_i\in g_i\ast h_i$ for all $i\in I$ and for $i\ge j$, $\varphi_{ij}(p_i)=p_j$. But then, $g_i\in p_i\ast h_i^{-1}$ for all $i\in I$ and $\varphi_{ij}(h_i^{-1})=h_j^{-1}$ for all $i\ge j$ (since $(h_i)_{i\in I}\in G$). Then $(g_i)_{i\in I}\in (p_i)_{i\in I}\ast(h_i^{-1})_{i\in I}$. We use a similar argument to prove that $(h_i)_{i\in I}\in (g_i^{-1})_{i\in I}\ast(p_i)_{i\in I}$.
Finally, let
$$(x_i)_{i\in I}\in [(a_i)_{i\in I}\ast(b_i)_{i\in I}]\ast(c_i)_{i\in I}.$$
Then $(x_i)_{i\in I}\in (e_i)_{i\in I}\ast(c_i)_{i\in I}$ for some $(e_i)_{i\in I}\in (a_i)_{i\in I}\ast(b_i)_{i\in I}$. Hence $x_i\in e_i\ast c_i$ with $e_i\in a_i\ast b_i$ for all $i\in I$, which means 
$$x_i\in (a_i\ast b_i)\ast c_i=a_i\ast(b_i\ast c_i).$$
Then for all $i\in I$, $x_i\in a_i\ast g_i$ for some $g_i\in b_i\ast c_i$. Now, for $i\ge j$ we have $\varphi_{ij}(x_i)=x_j$ and
$$x_j\in \varphi_{ij}(a_i\ast g_i)\subseteq
\varphi_{ij}(a_i)\ast \varphi_{ij}(g_i)=a_j\ast \varphi_{ij}(g_j).$$
We also have $x_j\in a_j\ast g_j$, which imply $x_j^{-1}\in g_j^{-1}\ast a_j^{-1}$. Then
\begin{align*}
    1&\in x_j^{-1}x_j\subseteq (g_j^{-1}\ast a_j^{-1})(a_j\ast\varphi_{ij}(g_i)).
\end{align*}
Since $G_i$ has the weak cancellation property we have $g_j^{-1}=\varphi_{ij}((g_i))^{-1}$, which means $g_j=\varphi_{ij}(g_i))$. Then $(g_i)_{i\in I}\in G$ and we have
$$[(a_i)_{i\in I}\ast(b_i)_{i\in I}]\ast(c_i)_{i\in I}\subseteq
(a_i)_{i\in I}\ast[(b_i)_{i\in I}\ast(c_i)_{i\in I}].$$
With a similar argument we conclude the reverse inclusion and then $G$ is a subgroup of the product. Denote by $\pi_i:\prod_{i\in I}G_i\rightarrow G_i$ the natural projection and $\psi_i:=\pi_i|_G$. We have that $\{G,\psi_i\}$ is compatible with $\{G_i,\varphi_{ij}, I\}$ in the sense that, for all $i\ge j$, $\varphi_{ij}\circ\psi_i=\psi_j$. Now, suppose that $\{H,\rho_i\}$ is compatible with $\{G_i,\varphi_{ij}, I\}$ and define 
$$\rho:H\rightarrow\prod_{i\in I}G_i$$
by the rule $\rho(x)=(\rho_i(x))_{i\in I}$. We have that $\rho$ is a morphism and since $\{H,\rho_i\}$ is compatible, for all $x\in H$ and all $i\ge j$ in $I$ we have $\varphi_{ij}(\rho_i(x))=\rho_j(x)$. This means that $(\rho_i(x))_{i\in I}\in G$ and $\mbox{Im}(\rho)\subseteq G$. Then we can consider $\rho:H\rightarrow G$, which proves that in fact,
$$G=\varprojlim_{i\in I}G_i.$$
\end{proof}

% {\color{red}{Acho que aqui voces poderia fazer alguns resultados sobre monos antes dos keneis.
% Por exemplo mostrar que os monos sao extamente os morfismos injetores (usar morfismos distintos $1 \to G$ para mostrar que um mono $f : G \to H$ é injetor.  }}

If $f:G\rightarrow H$ is a morphism, we already know that $\mbox{Ker}(f)$ is a full subgroup of $G$. It is straightforward to verify that $(\mbox{Ker}(f),\iota)$ satisfies the universal property of kernel for the category of hypergroups (here $\iota:\mbox{Ker}(f)\rightarrow G$ is the inclusion morphism, which is full). Then we have the following:

\begin{proposition}\label{prop:hypergroups_have_kernels}
 Every morphism in the category of hypergroups has a kernel.
\end{proposition}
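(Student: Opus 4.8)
The plan is to verify that the pair $(\mbox{Ker}(f),\iota)$ satisfies the universal property of the kernel in the category of hypergroups. By the preceding discussion, $\mbox{Ker}(f)$ is a full subgroup of $G$ and $\iota:\mbox{Ker}(f)\hookrightarrow G$ is the (full) inclusion morphism. Since the category has a zero object (the trivial hypergroup $\textbf{1}$), we have a well-defined zero morphism $G\rightarrow H$ factoring through $\textbf{1}$, namely $g\mapsto 1$. The first thing I would check is that $f\circ\iota$ equals this zero morphism, which is immediate: for every $k\in\mbox{Ker}(f)$ we have $f(\iota(k))=f(k)=1$.

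Next I would establish the universal property itself. Suppose $\varphi:K\rightarrow G$ is any morphism of hypergroups with $f\circ\varphi$ equal to the zero morphism, i.e.\ $f(\varphi(x))=1$ for all $x\in K$. The goal is to produce a unique morphism $\tilde\varphi:K\rightarrow\mbox{Ker}(f)$ with $\iota\circ\tilde\varphi=\varphi$. The definition is forced: since $f(\varphi(x))=1$, we have $\varphi(x)\in\mbox{Ker}(f)$ for every $x\in K$, so setting $\tilde\varphi(x):=\varphi(x)$ gives a well-defined map landing in $\mbox{Ker}(f)$. Uniqueness is then automatic, because $\iota$ is injective: any $\tilde\varphi$ with $\iota\circ\tilde\varphi=\varphi$ must satisfy $\tilde\varphi(x)=\varphi(x)$.

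The remaining step is to confirm that $\tilde\varphi$ is genuinely a morphism of hypergroups, i.e.\ that it satisfies the three defining conditions. This follows directly from the fact that $\varphi$ does, since $\tilde\varphi$ has the same underlying map and $\mbox{Ker}(f)$ carries the operation inherited from $G$: if $c\in a\ast b$ in $K$ then $\varphi(c)\in\varphi(a)\ast\varphi(b)$ in $G$, and because all three images lie in $\mbox{Ker}(f)$ and $\iota$ is full (so the operation on $\mbox{Ker}(f)$ agrees with that of $G$ restricted to $\mbox{Ker}(f)$), this relation also holds in $\mbox{Ker}(f)$; the preservation of inverses and of $1$ transfer identically.

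I do not expect any serious obstacle here, as the argument mirrors the classical computation of kernels in the category of groups. The only point requiring a moment of care is the last step: one must use that $\mbox{Ker}(f)$ is a \emph{full} subgroup to be sure that the induced hyperoperation on $\mbox{Ker}(f)$ coincides with the restriction of $\ast$ from $G$, so that verifying the morphism condition for $\tilde\varphi$ reduces to the already-known morphism condition for $\varphi$. This fullness was recorded earlier precisely to make this verification routine.
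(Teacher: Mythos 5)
Your proof is correct and follows essentially the same route as the paper's: both verify that $(\mbox{Ker}(f),\iota)$ satisfies the universal property by observing that any $\varphi$ equalizing $f$ and $0$ must land in $\mbox{Ker}(f)$, with uniqueness forced by the injectivity of the inclusion. Your additional care in checking that the induced map is genuinely a hypergroup morphism (via the fullness of $\mbox{Ker}(f)$) is a welcome explicit step that the paper leaves implicit.
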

\begin{proof}
Let $f:G\rightarrow H$ be a morphism. We already know that $\mbox{Ker}(f)$ is a full subgroup of $G$. Suppose that $E$ is another hypergroup and $t:E\rightarrow G$ is a morphism such that $0\circ t=f\circ t$.
$$\xymatrix@!=3pc{E\ar[r]^t & G\ar@<.7ex>[r]^-{f}\ar@<-.7ex>[r]_-{0} & H}$$
We have to show that there is a unique morphism $\overline t:E\rightarrow\mbox{Ker}(f)$ commuting the following diagram:
$$\xymatrix@!=4pc{
E\ar[dr]^t\ar@{.>}[d]_{\overline t} & & \\
\mbox{Ker}(f)\ar[r]^\iota & G\ar@<.7ex>[r]^-{f}\ar@<-.7ex>[r]_-{0} & H}$$
where $\iota:\mbox{Ker}(f)\rightarrow G$ is the (full) inclusion morphism. Since $0\circ t=f\circ t$, for all $x\in G$ we have $f(t(x))=0$ and $t(x)\in\mbox{Ker}(f)$. Hence we can consider $t:E\rightarrow\mbox{Ker}(f)$ and take $\overline{t}=t$. If $t':E\rightarrow\mbox{Ker}(f)$ is such that $t=\iota\circ t'$, then for all $x\in G$ we have $t(x)=\iota(t'(x))=t'(x)$, and thus $t'=t$.
\end{proof}

\begin{remark}
Every kernel is a monomoprhism since it is an equalizer. Remind that in the category of groups, we always can obtain the equalizer $\mbox{Eq}(f,g)$ from the kernel $\mbox{Ker}(f-g)$, but for hypergroups this property does not hold, because it is not true that $f\circ h = g \circ h $ if and only if $(f-g)\circ h = 0$, i.e, there is not ``only one morphism $f-g$".
\end{remark}

\begin{proposition}\label{injfull}
Let $f:G\rightarrow H$ be a full morphism of hypergroups. Then $f$ is injective if and only if $\mbox{Ker}(f)=\{1\}$.
\end{proposition}
\begin{proof}
($\Rightarrow$) Follow immediately. For ($\Leftarrow$), let $a,b\in G$ such that $f(a)=f(b)$. Then $1\in f(a)\ast f(b)^{-1}$ and since $f$ is full we have
$1\in f(a)\ast f(b)^{-1}=f(a\ast b^{-1})$. So there exist $c\in a\ast b^{-1}$ such that $f(c)=1$. Then $c\in \mbox{Ker}(f)=\{1\}$. So $a=(b^{-1})^{-1}=b$.
\end{proof}

% {\color{red}{Acho que aqui voces poderia fazer alguns resultados sobre epimorfismos extremais e mostrar que estes sao extametente os morfismos sobre e full PARA O CASO ABELIANO, pelo menos. Ver tambem a proposta de dedinicap de imagem na linha 65.}}

Next, we restrict our attention to the category of abelian hypergroups in order to take quotients and analyze cokernels and coproducts.

\begin{proposition}\label{prop:hyper_group_have_cokernel}
 Every full morphism in the category of abelian hypergroups has a cokernel.
\end{proposition}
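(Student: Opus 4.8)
The plan is to imitate the abelian-group construction and take the cokernel of $f\colon G\to H$ to be the quotient $H/\mathrm{Im}(f)$ together with its canonical projection. The first --- and decisive --- step is to observe that the hypothesis that $f$ be \emph{full} is exactly what makes this quotient legitimate. Indeed, $\mathrm{Im}(f)$ is always a subgroup of $H$, but the quotient construction preceding this proposition was only defined (as an abelian hypergroup with a projection morphism) for \emph{full} subgroups. So I would first show that, when $f$ is full, $K:=\mathrm{Im}(f)$ is a full subgroup: given $x=f(a)$ and $y=f(b)$ in $K$, fullness yields $x\ast y=f(a)\ast f(b)=f(a\ast b)\subseteq\mathrm{Im}(f)=K$, so the entire hyperproduct of two elements of $K$ stays inside $K$, which is precisely the condition for the inclusion $K\hookrightarrow H$ to be full.

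With $K$ full, the quotient theorem applies and $C:=H/K$ is an abelian hypergroup equipped with a projection morphism $\pi\colon H\to C$, $\pi(h)=[h]$. Next I would check $\pi\circ f=0$: for $a\in G$ we have $f(a)\in K$, and the coset of any element of a full subgroup is the zero coset $[0]$ (the inclusion $k+K\subseteq K$ is immediate from fullness, and the reverse inclusion follows by choosing, for $m\in K$, an element $k'\in k^{-1}\ast m\subseteq K$ so that $m\in k\ast k'$). Hence $\pi(f(a))=[0]$, that is, $\pi\circ f=0$.

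The heart of the argument is the universal property. Given any morphism $g\colon H\to E$ with $g\circ f=0$, I would define $\bar g\colon C\to E$ by $\bar g([h]):=g(h)$, and the step I expect to be the main obstacle is showing this is \emph{well defined}. The subtlety, flagged in the remark after the quotient theorem, is that equality of cosets $[h]=[h']$ is strictly weaker than $h-h'\subseteq K$: it only yields $(h-h')\cap K\neq\emptyset$. I would exploit exactly this weaker fact. Since $h\in h+K=h'+K$, axiom (i) produces some $k\in(h-h')\cap K$; applying the morphism $g$ gives $g(k)\in g(h)-g(h')$, while $g(k)=0$ because $k\in\mathrm{Im}(f)$ and $g\circ f=0$. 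Thus $0\in g(h)-g(h')$, and by the uniqueness of inverses ($0\in a\ast b\Rightarrow b=a^{-1}$) this forces $g(h)=g(h')$. Hence $\bar g$ is well defined.

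It then remains to verify the routine points: that $\bar g$ is a morphism (the identity, inverse, and hyperoperation conditions all transfer from $g$ through representatives, again using well-definedness to turn $[c]\in[a]+[b]$ into a genuine relation $c'\in a'+b'$ among chosen representatives), that $\bar g\circ\pi=g$ by construction, and that $\bar g$ is unique because $\pi$ is surjective. I do not expect difficulty in these verifications; the essential content is concentrated in the fullness of $\mathrm{Im}(f)$ and in the well-definedness of $\bar g$.
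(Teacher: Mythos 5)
Your proposal is correct and follows essentially the same route as the paper: the cokernel is realized as the quotient $H/\mathrm{Im}(f)$ with its canonical projection, the induced map $\bar g([h])=g(h)$ is shown to be well defined using $g\circ f=0$, and uniqueness follows from surjectivity of $\pi$. You are in fact somewhat more careful than the paper, which asserts without proof that $\mathrm{Im}(f)$ is a full subgroup and handles well-definedness via the slightly looser claim that $x+\mathrm{Im}(f)=y+\mathrm{Im}(f)$ yields $x+r=y+s$; your version via an element of $(h-h')\cap\mathrm{Im}(f)$ and uniqueness of inverses is cleaner but not a different argument.
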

\begin{proof}
If $f:G\rightarrow H$ is a full morphism then $\mbox{Im}(f)$ is a full subgroup (by the very definition of full morphisms). Let $\pi:H\rightarrow H/\mbox{Im}(f)$ be the canonical projection. Suppose that $E$ is another abelian hypergroup and $t:H\rightarrow E$ is a morphism such that $t\circ 0=t\circ f$.
$$\xymatrix@!=3pc{G\ar@<.7ex>[r]^-{f}\ar@<-.7ex>[r]_-{0} & H\ar[r]^t & E }$$
We have to show that there exists a unique morphism $\overline t:H/\mbox{Im}(f)\rightarrow E$ commuting the following diagram:
$$\xymatrix@!=4pc{G\ar@<.7ex>[r]^-{f}\ar@<-.7ex>[r]_-{0} & H\ar[dr]^t\ar[r]^{\pi} & H/\mbox{Im}(f)\ar@{.>}[d]^{\overline t} \\
& & E}$$
Define\footnote{Remember that $[x]:=x+\mbox{Im}(f)$.} $\overline t([x]):=t(x)$. If $[x]=[y]$ then $x+\mbox{Im}(f)=y+\mbox{Im}(f)$ and $x+r=y+s$ for some $r,s\in\mbox{Im}(f)$, saying, $r=f(r_0)$ and $s=f(s_0)$. Since $t\circ 0=t\circ f$ we have 
$$t(x+r)\subseteq t(x)+t(r)=t(x)+t(f(r_0))=t(x)+0=t(x).$$
Then $t(x+r)=t(x)$. Similarly we have $t(y+s)=t(y)$. Then
$$t(x)=t(x+r)=t(y+s)=t(y)$$
and we prove that $[x]=[y]$ imply $t(x)=t(y)$. In particular the rule $\overline{t}$ does not depend on the choice of representatives. Now let $[d]\in [x]+[y]$. Then there exists $d'\in x'+y'$ with $[d']=[d]$, $[x']=[x]$ and $[y']=[y]$. Hence $t(d')\in t(x')+t(y')$ which implies $t(d)\in t(x)+t(y)$ and then, $\overline t[d]\in\overline t[x]+\overline t[y]$. Therefore, $\overline t$ is a morphism. By the construction we have $t=\overline t\circ\pi$. If $g:H/\mbox{Im}(f)\rightarrow E$ is another full morphism such that $t=g\circ\pi$, we have $g=\overline t$ (basically because $\pi$ is surjective).
\end{proof}

% When the set of indices $I$ is finite, the coproduct coincide with the product. If a binary product is a simultaneously binary coproduct, we say it is a \textbf{biproduct}.

%\begin{corollary}
%The category of abelian hypergroups has biproducts.
%\end{corollary}

Moreover, adapting the usual construction of directed colimits in the category of abelian groups (see for instance, \cite{ribes2000profinite} or \cite{borceux1994handbook1}) and the argument used in \cite{ribeiro2021anel} we get the following:

\begin{proposition}
 The category of abelian hypergroups has directed colimits.
\end{proposition}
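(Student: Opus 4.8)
The plan is to adapt the standard construction of directed colimits of abelian groups, realizing the colimit as a quotient of the coproduct $\bigoplus_{i\in I}G_i$ by the full subgroup generated by the relations identifying an element with its images under the transition morphisms. Concretely, given a directed poset $I$ and an inductive system $\{G_i,\varphi_{ij},I\}$ with morphisms $\varphi_{ij}\colon G_i\rightarrow G_j$ for $i\le j$ satisfying the usual compatibility, I would first form the coproduct $C:=\bigoplus_{i\in I}G_i$ together with its canonical injections $\iota_i\colon G_i\rightarrow C$, which exist by the construction preceding this proposition. I would then let $N\subseteq C$ be the subgroup generated by all elements of the form appearing in $\iota_i(a)-\iota_j(\varphi_{ij}(a))$ as $a$ ranges over $G_i$ and $i\le j$ ranges over $I$, using the description of generated subgroups from Proposition \ref{prop1}. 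Since Proposition \ref{prop:hyper_group_have_cokernel} produces quotients by \emph{full} subgroups and the convention for non-full $H$ is $C/H:=C/\langle H\rangle$, I would set
$$\varinjlim_{i\in I}G_i:=C/\langle N\rangle,$$
with structural maps $\lambda_i:=\pi\circ\iota_i\colon G_i\rightarrow C/\langle N\rangle$, where $\pi$ is the canonical projection.

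First I would verify that the $\lambda_i$ are compatible with the system, i.e. $\lambda_j\circ\varphi_{ij}=\lambda_i$ for all $i\le j$; this holds precisely because $\iota_i(a)-\iota_j(\varphi_{ij}(a))\in N$, so the two images become equal cosets in the quotient. Next I would check the universal property: given any abelian hypergroup $H$ with compatible morphisms $\rho_i\colon G_i\rightarrow H$, the coproduct universal property yields a unique $\rho\colon C\rightarrow H$ with $\rho\circ\iota_i=\rho_i$, and compatibility of the $\rho_i$ forces $\rho$ to send each generator of $N$ into $\{0\}$, so $\rho$ factors through the cokernel to give $\bar\rho\colon C/\langle N\rangle\rightarrow H$ with $\bar\rho\circ\lambda_i=\rho_i$. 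Uniqueness of $\bar\rho$ would follow from surjectivity of $\pi$ together with directedness, which guarantees that every element of the colimit is represented through some single $G_i$.

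An alternative, more hands-on route — and probably the cleaner one given the subtleties of hyperoperations — is to mimic the group-theoretic description directly: put $L:=\big(\coprod_{i\in I}G_i\big)/\!\sim$, where $a\in G_i$ and $b\in G_j$ are identified when $\varphi_{ik}(a)=\varphi_{jk}(b)$ for some $k\ge i,j$ (using directedness to find such a $k$), and define the hyperoperation on classes $[a]\ast[b]$ by pushing $a,b$ forward into a common $G_k$ and taking $\{[c]:c\in\varphi_{ik}(a)\ast\varphi_{jk}(b)\}$. I would then check that $\sim$ is an equivalence relation and that the hyperoperation, the inversion $[a]^{-1}:=[a^{-1}]$, and the identity $[1]$ are well defined independently of the choice of the upper bound $k$, again invoking directedness to compare two choices through a common further index.

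The main obstacle I expect is the well-definedness and verification of the hypergroup axioms (Definition \ref{definition:multigroupI}) for the hyperoperation on classes, rather than the universal property, which is formal. In the ordinary group setting single-valued operations make representative-independence a one-line check, but here one must show that the \emph{set} $\varphi_{ik}(a)\ast\varphi_{jk}(b)$, transported along further transition maps, yields the same family of classes, and that associativity of the induced hyperoperation survives the colimit. Because the transition morphisms need not be full, the image $\varphi_{ik}(a\ast b)$ can be strictly smaller than $\varphi_{ik}(a)\ast\varphi_{ik}(b)$, so I would have to argue carefully — most likely at the level of the quotient-of-coproduct construction, where fullness of the projection maps and the controlled behavior of $C/\langle N\rangle$ make the axiom checks reduce to properties already established for products, coproducts, and cokernels of full morphisms.
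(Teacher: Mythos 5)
Your second, ``hands-on'' route is essentially the paper's proof: the paper forms the disjoint union $U=\dot\bigcup_{i\in I}G_i$, identifies $x_i\sim x_j$ when $\varphi_{ik}(x_i)=\varphi_{jk}(x_j)$ for some $k\ge i,j$, puts a hyperoperation on $U/\sim$, verifies the hypergroup axioms, and checks the universal property. One important difference of detail: you propose to define $[a]\ast[b]$ by pushing $a,b$ into \emph{a} common $G_k$ and taking $\{[c]:c\in\varphi_{ik}(a)\ast\varphi_{jk}(b)\}$, and you rightly worry that this set may depend on the choice of $k$ and of representatives, since the transition maps need not be full. The paper sidesteps exactly this by defining
$$\overline x+\overline y:=\{\overline z:\ \exists\, z'\sim z,\ x'\sim x,\ y'\sim y \mbox{ with } z'\in\varphi_{ik}(x')+\varphi_{jk}(y'),\ k\ge i,j\},$$
i.e.\ by taking the union over \emph{all} representatives and \emph{all} upper bounds, which makes representative-independence automatic and pushes all the work into verifying the axioms (the associativity check is the long part of the paper's proof). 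If you adopt that definition, your outline goes through.

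Your first route, quotienting the coproduct $C=\bigoplus_i G_i$ by $\langle N\rangle$, has a genuine gap at the factorization step. You claim that compatibility of the $\rho_i$ ``forces $\rho$ to send each generator of $N$ into $\{0\}$.'' But a generator is an element $w\in\iota_i(a)-\iota_j(\varphi_{ij}(a))$, and all that compatibility gives you is $\rho(w)\in\rho_i(a)-\rho_i(a)$, a set that \emph{contains} $0$ but in a hypergroup is generally strictly larger than $\{0\}$. The well-definedness argument for quotient maps (as in the paper's cokernel proof, Proposition \ref{prop:hyper_group_have_cokernel}) requires that the morphism literally annihilate the subgroup being quotiented out; since $\rho$ need not annihilate $N$, the induced map $\bar\rho$ on $C/\langle N\rangle$ need not be well defined, and $C/\langle N\rangle$ can fail the universal property. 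So your instinct to prefer the second route is correct, but your closing plan to resolve the axiom checks ``at the level of the quotient-of-coproduct construction'' would send you back into this dead end; the verifications have to be done directly on $U/\sim$ as in the paper.
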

\begin{proof}
Let $I$ be a directed poset and $\{G_i,\varphi_{ij}, I\}$ be an injective system of hypergroups. In other words, we have for all $i\le j$ in $I$ a morphism $\varphi_{ij}:G_i\rightarrow G_j$ such that for all $i\le j\le k$ the following diagram commute:
$$\xymatrix{G_i\ar[rr]^{\varphi_{ik}}\ar[dr]_{\varphi_{ij}
} & & G_k \\ & G_j\ar[ur]_{\varphi_{jk}} &}$$
Now, let $U=\dot\bigcup_{i\in I}G_i$ (the disjoint union of $G_i$'s). For $x_i\in G_i$ and $x_j\in G_j$, we say that $x_i\sim x_j$ if there exists $k\ge i,j$ in $I$ such that $\varphi_{ik}(x_i)=\varphi_{jk}(x_j)$. This is an equivalence relation. For $\overline x,\overline y\in U/\sim$, define
$$\overline x+\overline y:=\{\overline z:\mbox{there exist }z'\sim z,\,x'\sim x,\,y'\sim y\mbox{ with }z'\in\varphi_{ik}(x')+\varphi_{jk}(y'),\,k\ge i,j\}.$$
We prove that $(U/\sim,+,\overline0)$ is an abelian hypergroup. Of course, $+$ is commutative and if $\overline y\in\overline0+\overline x$, then
$y'\in\varphi_{ik}(0)+\varphi_{jk}(x')=0+\varphi_{jk}(x')=\varphi_{jk}(x')$ for some $y'\sim y$ and $x'\sim x$. Then $y'\sim\varphi_{jk}(x')\sim x'\sim x$ (because $\varphi_{kk}=id_{A_k}$) and $y\sim x$, which means $\overline y=\overline x$.

Now let $\overline d\in\overline x+\overline y$. Then
$d'\in\varphi_{ik}(x')+\varphi_{jk}(y')$ for some $x'\sim x$, $y'\sim y$ and $d'\sim d$. Since $A_k$ is an abelian hypergroup, we have
$$\varphi_{ik}(x')\in d'-\varphi_{jk}(y')=\varphi_{kk}(d')-\varphi_{jk}(y'),$$
which means $\overline{\varphi_{ik}(x')}\in\overline{d'}-\overline{\varphi_{jk}(y')}$, which turns out to be $\overline x\in\overline d-\overline y$.

Finally, let $\overline d\in(\overline x+\overline y)+\overline z$. Then $\overline d\in\overline e+\overline z$ for some $\overline e\in\overline x+\overline y$. Then, we have $d'\sim d$, $e'\sim e$, $z'\sim z$ with $d'\in\varphi_{ik}(e')+\varphi_{jk}(z')$ and $e''\sim e$, $x'\sim x$, $y'\sim y$ with $e''\in\varphi_{pr}(x')+\varphi_{qr}(y')$. Now, since $e''\sim e'\sim\varphi_{ik}(e')$, we can choose $t\ge i,j,k,p,q,r$ such that $\varphi_{kt}(\varphi_{ik}(e'))=\varphi_{rt}(e'')$. Then
$$\varphi_{kt}(d')\in\varphi_{kt}(\varphi_{ik}(e'))+\varphi_{kt}(\varphi_{jk}(z'))\mbox{ and }\varphi_{rt}(e'')\in\varphi_{rt}(\varphi_{pr}(x'))+\varphi_{rt}(\varphi_{qr}(y')).$$
Then
$$\varphi_{kt}(d')\in[\varphi_{rt}(\varphi_{pr}(x'))+\varphi_{rt}(\varphi_{qr}(y'))]+\varphi_{kt}(\varphi_{jk}(z')).$$
Since $A_t$ is a hypergroup, we have
$$[\varphi_{rt}(\varphi_{pr}(x'))+\varphi_{rt}(\varphi_{qr}(y'))]+\varphi_{kt}(\varphi_{jk}(z'))
=\varphi_{rt}(\varphi_{pr}(x'))+[\varphi_{rt}(\varphi_{qr}(y'))+\varphi_{kt}(\varphi_{jk}(z'))].$$
Then $\varphi_{kt}(d')\in\varphi_{rt}(\varphi_{pr}(x'))+f$ for some 
$f\in \varphi_{rt}(\varphi_{qr}(y'))+\varphi_{kt}(\varphi_{jk}(z'))$. This means $\overline{\varphi_{kt}(d')}\in\overline{\varphi_{rt}(\varphi_{pr}(x'))}+\overline f$ with $\overline f\in \overline{\varphi_{rt}(\varphi_{qr}(y'))}+\overline{\varphi_{kt}(\varphi_{jk}(z'))}$ which turns out to be $\overline d\in\overline x+\overline f$ with $\overline f\in\overline y+\overline z$, and we prove that 
$$(\overline x+\overline y)+\overline z\subseteq
\overline x+(\overline y+\overline z).$$
The other inclusion follows by a similar argument.

Thus $U/\sim$ is an abelian hypergroup which we will denote by $G:=U/\sim$. For each $g_i\in G_i$, define $\psi_i:G_i\rightarrow G$ by $\psi_i(g_i)=\overline g_i$. This is a morphism and $\{G_i,\psi_i\}$ is compatible with $\{G_i,\varphi_{ij}, I\}$ in the sense that for all $i\le j$, $\psi_j\varphi_{ij}=\psi_i$.

Now suppose that $\{H,\rho_i\}$ is compatible with $\{G_i,\varphi_{ij}, I\}$. We define a morphism $\rho:G\rightarrow H$ by the following rule: for $a\in U$ with $a=\overline x$ for some $x\in G_i$, define $\rho(\overline x)=\rho_i(x)$. Then $\rho$ is the unique morphism such that $\rho\psi_i=\rho_i$ for all $i\in I$. So we have
$$G=\varinjlim_{i\in I}G_i.$$
\end{proof}

Let $\{G_i\}_{i\in I}$ be a family of abelian hypergroups. Define
$$\bigoplus_{i\in I}G_i:=\left\lbrace(a_i)_{i\in I}\in\prod_{i\in I}G_i:\mbox{ thre exist }n_0\mbox{ such that }c_i=0\mbox{ for all }i\ge n_0\right\rbrace.$$

In fact, $\bigoplus_{i\in I}G_i$ is a full subgroup of $(\prod_{i\in I}G_i,\ast,(1)_{i\in I})$. Note that $\bigoplus_{i\in I}G_i$ is not the coproduct in general, but we use it to define a weak biproduct:

\begin{definition}
 Let $\mathcal{C}$ be a category with zero morphisms. The \textbf{weak biproduct} of a finite collection of objects $A_1, ..., A_n$ in $\mathcal{C}$ is an object  $A_1\oplus \dots \oplus A_n$ together with:
 
\begin{itemize}
    \item projection morphisms $p_k:A_1\oplus \dots \oplus A_n \to A_k $
    \item injection morphisms $\iota_k : A_k \to A_1\oplus \dots \oplus A_n$
\end{itemize}
such that
\begin{itemize}
    \item $p_k \circ \iota_k = id_{A_k}$
    \item $p_l \circ \iota_k = 0,$ the zero morphism $A_k \to A_l$, for $k \neq l$\footnote{In a category with a zero object $\textbf{1}$, the zero morphism $0_{AB}:A \to B$ between two objects $A, B$ is the unique morphism that factors through $\textbf{1}$.};
    \item $(A_1\oplus \dots \oplus A_n,p_1,...,p_n)$ is a product;
    \item $id_{A_1\oplus \dots \oplus A_n}\subseteq i_1\circ p_1+\dots +i_n\circ p_n$.
\end{itemize}
\end{definition}

% Moreover, if  $\mathcal{C}$ is a preadditive category with projections and injections satisfying the two conditions in the middle, then each of the last two conditions is equivalent to  $i_1\circ p_1+\dots +i_n\circ p_n= id_{A_1\oplus \dots \oplus A_n}$.

% Above, we discussed that the category of abelian hypergroups does not have coproducts, and in Section \ref{sec:hyper-abelian} we will argue that it is neither preadditive. So we propose a notion of \textbf{products with injections}, that is, an object $A_1\oplus \dots \oplus A_n$ that satisfies the first five condition of the Definition and such that 

\begin{lemma}
 The category of abelian hypergroups has weak biproducts.
\end{lemma}
% \begin{proof}
% {\color{red}resta mostrar $i_1\circ p_1+\dots +i_n\circ p_n= id_{A_1\oplus \dots \oplus A_n}$. Se for verdade a gente simplifica a definição }
% \end{proof}

\section{Hyperbolic Hypergroups}\label{sec:hyperbolic}

In the context of abstract theories of quadratic forms, there is a certain kind of multi-algebra that arises naturally, the hyperbolic hyperrings/hyperfields (see, for instance, \cite{roberto2021quadratic}  and \cite{roberto2021ktheory}). Here, we generalize this for hypergroups.

\begin{definition}
 A hypergroup $G$ is \textbf{hyperbolic} if for all $a\in G\setminus\{1\}$, $a\ast a^{-1}=G$. The category of hyperbolic hypergroups and its morphisms will be denoted by $\mbox{HHG}$.
\end{definition}

Every hyperbolic hypergroup $G$ is \textbf{rooted} in the sense that if $a,b\in G\setminus\{1\}$ then $a,b\in a\ast b$. In fact, a hypergroup $G$ if rooted if and only if is hyperbolic. In fact, if $a,b\in a\ast b$ for all $a,b\in G\setminus\{1\}$ then $b\in a\ast a^{-1}$. On the other hand, if $a\ast a^{-1}=G$ for all $a\in G\setminus\{1\}$, then for all $a,b\in G\setminus\{1\}$  we get $b\in a^{-1}\ast a$ and $a\in b^{-1}\ast b$. This implies $a\in a\ast b$ and $b\in a\ast b$. Then we use ``hyperbolic" and ``rooted" as synonyms.

Let $G_1$ and $G_2$ be two hyperbolic hypergroups. We define a new hyperbolic hypergroup $(G_1\times_h G_2,\ast,r,(1,1))$ by the following: the adjacent set of this structure is 
$$G_1\times_h G_2:=(G_1\setminus\{1\}\times G_2\setminus\{1\})\cup\{(1,1)\}.$$
For $(a,b),(c,d)\in G_1\times_h G_2$ we define
\begin{align}\label{prodmultiop}
    r(a,b)&=(r(a),r(b)),\nonumber\\
    (a,b)\ast(c,d)&=\{(e,f)\in G_1\times G_2:e\in a\ast c\mbox{ and }f\in b\ast d\}\cap(G_1\times_h G_2).
\end{align}
In other words, $(a,b)\ast(c,d)$ is defined in order to avoid elements of $G_1\times G_2$ of type $(x,1),(1,y)$, $x,y\ne1$. Using the very same argument of Theorem 2.17 in \cite{roberto2021ktheory} we obtain the following.

\begin{theorem}[Product of Hyperbolic Hypergroups]\label{hfproduct}
Let $G_1,G_2$ be hyperbolic hypergroups and $G_1\times_h G_2$ as above. Then $G_1\times_h G_2$ is a hyperbolic hypergroup and satisfy the Universal Property of product for $G_1$ and $G_2$.
\end{theorem}

In order to avoid confusion and mistakes, we denote the binary product in $\mbox{HHG}$ by $G_1\times_hG_2$. For hyperbolic hypergroups $\{G_i\}_{i\in I}$, we denote the product of this family by
$$\prod^h_{i\in I}G_i,$$
with underlying set defined by
$$\prod^h_{i\in I}G_i:=\left(\prod_{i\in I}\dot G_i\right)\cup\{(1_i)_{i\in I}\}$$
and operations defined by rules similar to the ones defined in \ref{prodmultiop}. If $I=\{1,...n\}$, we denote
$$\prod^h_{i\in I}G_i=\prod^n_{\substack{i=1 \\ [h]}}G_i.$$

Of course, $G_1\times_hG_2\ne G_1\times G_2$ so in the category of hyperbolic hypergroups we do not necessarily have coproducts. We observe that in $\mbox{HHG}$ we have nice properties for morphisms, such as the following.

\begin{theorem}\label{caracter}
Let $G$ be a hyperbolic hypergroup. There is a bijection between $G$ and morphisms $\varphi:Q_2\rightarrow G$.
\end{theorem}
\begin{proof}
For  $a\in G$, we have a morphism $\varphi_a:Q_2\rightarrow G$ given by the rule
$$\varphi_a(k)=\begin{cases}a\mbox{ if }k=1\\1\mbox{ if }k=0\\-a\mbox{ if }k=-1\end{cases}.$$
Now, define $\Phi:G\rightarrow\mbox{HHG}(Q_2,G)$ by the rule $\Phi(a)=\varphi_a$. Then $\Phi$ is the desired bijection.
\end{proof}

\begin{theorem}\label{hmono}
Let $f:G\rightarrow H$ be a morphism in the category of hyperbolic hypergroups. Then $f$ is a mono if and only if $f$ is injective.
\end{theorem}
\begin{proof}
($\Rightarrow$) Let $\varphi_a$ as in Theorem \ref{caracter}.
Of course, the fact of $\varphi_a$ be a morphism for $a\ne1$ is consequence of $G$ be a hyperbolic hypergroup. Moreover $\varphi_a=\varphi_b$ if and only if $a=b$. Now let $a,b\in G$ with $a\ne b$. If $f(a)=f(b)$ then $f\circ\varphi_a=f\circ\varphi_b$. Since $f$ is mono we have $\varphi_a=\varphi_b$, which means $a=b$. Hence $f$ is injective. \newline
($\Leftarrow$) Let $f$ injective and $g_1,g_2:A\rightarrow G$ such that $f\circ g_1=f\circ g_2$. Then for all $x\in A$, $f(g_1(x))=f(g_2(x))$. Since $f$ is injective this means $g_1(x)=g_2(x)$ for all $x\in A$ and then $g_1=g_2$. 
\end{proof}

Another interesting property is regarding the kernels.

\begin{proposition}
 Let $f:G\rightarrow H$ be a morphism in $\mbox{HHG}$. Then $$\mbox{Ker}(f)=\{0\}\mbox{ or }\mbox{Ker}(f)=G.$$
\end{proposition}
\begin{proof}
If fact, if $a\ne0$ is in $\mbox{Ker}(f)$ then $-a\in\mbox{Ker}(f)$. Since $\mbox{Ker}(f)$ is a full subgroup of $G$, we have $a-a\subseteq \mbox{Ker}(f)$ and since $G$ is hyperbolic, $G=a-a\subseteq\mbox{Ker}(f)$.
\end{proof}

Using Proposition \ref{injfull} we get the following useful characterization:

\begin{corollary}
Let $f:G\rightarrow H$ be a full morphism in $\mbox{HHG}$. Then $f=0$ or $f$ is injective.
\end{corollary}

\section{Hyper-abelian Categories}\label{sec:hyper-abelian}
In this Section we seek for a generalization of the notion of an abelian category to encompass hyper algebraic structures. Let $G,H$ be hypergroups and denote
$$\mbox{Hom}(G,H):=\{f:G\rightarrow H:f\mbox{ is a morphism}\}.$$
For $f,g\in \mbox{Hom}(G,H)$, define
$$f\ast g:=\{h\in\mbox{Hom}(G,H):h(x)\in f(x)\ast g(x)\mbox{ for all }x\in G\}.$$

For all $x \in G$, and any morphism $f \in \mbox{Hom}(G,H)$ $$r(f)(x) = f(x)^{-1}$$
and 
$$1(x) = 1$$
That is, the identity in $\mbox{Hom}(G,H)$ is the constant morphism that always gives the identity of $H$.

\begin{remark}
A non-associative group is the same as a \textit{quasigroup} with (two-sided) identity element (or a invertible loop). When the multivalued operation $*$ of the hypergroup does not satisfies the associativity rule, we will call it \textbf{nonassociative hypergroup}.
\end{remark}

\begin{lemma}
With the above definition, $(\mbox{Hom}(G,H),\ast,r,1)$ is a nonassociative hypergroup.
\end{lemma}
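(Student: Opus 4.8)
The plan is to verify each of the four hypergroup axioms from Definition \ref{definition:multigroupI} for the quadruple $(\mbox{Hom}(G,H),\ast,r,1)$, where $\ast$, $r$, and the distinguished element $1$ are the operations defined immediately above. Throughout, the crucial recurring subtlety will be to confirm that every candidate map we produce is genuinely a \emph{morphism} of hypergroups, not merely a set function satisfying the pointwise membership condition; that is, whenever I claim some $h$ lies in $\mbox{Hom}(G,H)$, I must check conditions (i)--(iii) of the morphism definition for $h$ itself. This is where all the work lives, and I will want a small auxiliary observation: if $h(x)\in f(x)\ast g(x)$ for all $x$, then $h$ inherits morphism-hood from $f$ and $g$ using the basic hypergroup identities of Lemma \ref{hlem1} applied in $H$.

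First I would establish the well-definedness of $\ast$ (its values are nonempty), $r$, and $1$ as maps into $\mbox{Hom}(G,H)$. For the identity, $1\colon x\mapsto 1$ is a morphism since $c\in a\ast b$ forces $1\in 1\ast 1=\{1\}$, and $1^{-1}=1$. For $r(f)\colon x\mapsto f(x)^{-1}$, I would check the morphism axioms directly from Lemma \ref{hlem1}(iii) and (iv): if $c\in a\ast b$ then $f(c)\in f(a)\ast f(b)$, so $f(c)^{-1}\in f(a)^{-1}f(b)^{-1}$ by item (iii) applied in $H$, giving the membership $r(f)(c)\in r(f)(a)\ast r(f)(b)$; the remaining two morphism conditions are immediate. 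Then I would verify axiom (i): supposing $h\in f\ast g$, I must produce morphisms witnessing $f\in h\ast r(g)$ and $g\in r(f)\ast h$. Pointwise, $h(x)\in f(x)g(x)$ gives $f(x)\in h(x)g(x)^{-1}=h(x)\,r(g)(x)$ by Definition \ref{definition:multigroupI}(i) in $H$, and symmetrically $g(x)\in f(x)^{-1}h(x)=r(f)(x)\,h(x)$; since $f$ and $g$ are already known morphisms, these memberships say exactly that $f\in h\ast r(g)$ and $g\in r(f)\ast h$.

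Next I would handle axiom (ii): $g\in 1\ast f$ iff $g(x)\in 1\ast f(x)=\{f(x)\}$ for all $x$, i.e. $g(x)=f(x)$ for all $x$, which is precisely $f=g$; the same holds for $f\ast 1$ using the remark that $x\ast 1=\{x\}$ in $H$. Commutativity (axiom iv, when $H$ is abelian) is immediate pointwise, since $f(x)g(x)=g(x)f(x)$ forces $f\ast g=g\ast f$. The point of the statement is precisely that associativity need \emph{not} hold, so I would \emph{not} attempt to verify axiom (iii); instead I would remark that the pointwise construction does not propagate associativity from $H$, because the union-over-intermediate-values convention in Definition \ref{definition:multigroupI}(iii) interacts with the ``for all $x$'' quantifier in a way that breaks the identity $(f\ast g)\ast k=f\ast(g\ast k)$, which is exactly why the result asserts only a \emph{nonassociative} hypergroup.

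The main obstacle will be the morphism-preservation checks rather than the pointwise membership bookkeeping: at each step the set-theoretic membership in $H$ is a direct application of Lemma \ref{hlem1} and Definition \ref{definition:multigroupI}, but one must be careful that the elements $f$, $g$, $r(f)$, $r(g)$ appearing in the witnessing memberships are honest morphisms, so that the membership statements live in $\mbox{Hom}(G,H)$ and not merely in the set of pointwise-defined functions. Since $f,g$ are assumed to be morphisms and $r$ preserves morphism-hood (verified above), these witnesses are legitimate, and the axioms (i), (ii), and (iv) follow. I would organize the final write-up so that the auxiliary fact ``$r$ maps morphisms to morphisms'' is dispatched once at the start and then reused.
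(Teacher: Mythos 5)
Your verification of axioms (i) and (ii) is correct and is essentially the paper's own argument: both proceed pointwise, and in axiom (i) the witnesses are $f$, $g$, $r(f)$, $r(g)$, which are already known to be morphisms, so no new morphism has to be manufactured. Your explicit checks that $r(f)$ and the constant map $1$ are morphisms are sound (the paper omits them), and your diagnosis of why associativity fails matches the paper's: one inclusion of $(f\ast g)\ast h \subseteq f\ast(g\ast h)$ would require a \emph{morphism} $w$ with $w(x)\in g(x)\ast h(x)$ and $d(x)\in f(x)\ast w(x)$ for all $x$, and the pointwise-defined candidate $w$ need not be one.

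However, your ``small auxiliary observation'' --- that any function $h$ with $h(x)\in f(x)\ast g(x)$ for all $x$ automatically inherits morphism-hood from $f$ and $g$ --- is false, and it contradicts the rest of your own argument. If it were true, the intermediate function $w$ above (which satisfies exactly such a pointwise membership with respect to $g$ and $h$) would be a morphism, associativity would follow, and the lemma would assert an honest hypergroup rather than a nonassociative one. Concretely, from $c\in a\ast b$ one only gets $h(c)\in f(c)\ast g(c)\subseteq (f(a)\ast f(b))\ast(g(a)\ast g(b))$, which does not place $h(c)$ in the smaller set $h(a)\ast h(b)$; similarly $h(x^{-1})$ and $h(x)^{-1}$ both lie in $(f(x)\ast g(x))^{-1}$ in the abelian case but need not coincide. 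Fortunately you never actually invoke this observation --- in axioms (i) and (ii) the witnesses are $f$, $g$, $r(f)$, $r(g)$ themselves --- so your proof survives with the observation deleted, but it should be deleted. One further point you raise but do not settle (and the paper is silent on as well): nonemptiness of $f\ast g$ requires exhibiting at least one \emph{morphism} $h$ with $h(x)\in f(x)\ast g(x)$ for all $x$, which is not obvious and is not supplied by either argument.
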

\begin{proof}
    Let $h, f, g \in \mbox{Hom}(G,H)$ and $x \in A$ 
\begin{enumerate}[i -]
    \item  If $h \in f \ast g $, then $h(x) \in f(x) \ast g(x)$. Since $H$ is hypergroup:
    \begin{align*}
        f(x) & \in h(x) \ast (g(x))^{-1} =  h(x) \ast r(g)(x) \mbox{ implies } f \in h \ast r(g) \\
        g(x) & \in (f(x))^{-1} \ast h(x) = r(f)(x) \ast h(x) \mbox{ implies } g\in r(f) \ast h
    \end{align*}
    
    \item Since $H$ is hypergroup
    \begin{align*}
        g \in 1 \ast f & \mbox{ iff } g(x) \in 1(x) \ast f(x) = 1 \ast f(x)    \\
        & \mbox{ iff } g(x) = f(x)  \\
        & \mbox{ iff } f = g
    \end{align*}    
    %We prove that $w$ is in fact a morphism. If $a,b,c\in G$ with $c\in a\ast b$, we have
    %\begin{align*}
    %    d(a)&\in f(a)\ast w(a)\mbox{ and }w(a)\in g(a)\ast h(a);\\
    %    d(b)&\in f(b)\ast w(b)\mbox{ and }w(b)\in g(b)\ast h(b);\\
    %    d(c)&\in f(c)\ast w(c)\mbox{ and }w(c)\in g(c)\ast h(c).
    %\end{align*}
\end{enumerate}
\end{proof}
    
    Now we explain why the associativity does not hold in general: let $d\in (f\ast g)\ast h$. Then $d\in e\ast h$ for some $e\in f\ast g$. This means that for all $x\in G$, $d(x)\in e(x)\ast h(x)$ and $e(x)\in f(x)\ast g(x)$. Then 
    $$d(x)\in (f(x)\ast g(x))\ast h(x)=f(x)\ast (g(x)\ast h(x)).$$
    Hence, there is a function $w:G\rightarrow H$ such that 
    $$d(x)\in f(x)\ast w(x)\mbox{ and }w(x)\in g(x)\ast h(x)\mbox{ for all }x\in G.$$
    However, there is no reason to $w$ be a morphism.

\begin{definition}
 A category $C$ is \textbf{hyper-almost- preadditive} if each $Hom(G,H)$ in $C$ is a non-associative hypergroup and the composition of the morphisms is bilinear, i.e., the map
 	\begin{align*}
	    Hom(F,G) \times Hom(G,H)  &\longrightarrow  Hom(F,H) \\
	    (f,g)  &\mapsto  g \circ f
	\end{align*}
is such that 
   		$$(g\ast g')\circ (f \ast f') \subseteq g \circ f \ast g \circ f' \ast g' \circ f \ast g' \circ f' $$
\end{definition}

If $Hom(G,H)$ in the above definition is a hypergroup, then we say $C$ is a hyper-preadditive category. From now on, a hyper-almost-preadditive category will be called just almost-preadditive category if the context allows it. Next, we show what remains to obtain that the category of hypergroups is an example of a almost-preadditive category.

\begin{lemma}\label{lem:hypergroups_are_preadditive}
Let $G,H$ be hypergroups. For $f,f' \in Hom(F,G)$ and $g, g' \in Hom(G,H)$ we have the bilinearity $$(g\ast g')\circ (f \ast f') \subseteq g \circ f \ast g \circ f' \ast g' \circ f \ast g' \circ f'  $$
\end{lemma}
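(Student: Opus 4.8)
The plan is to take an arbitrary element of the left-hand side and track it through the composition pointwise, using the morphism axioms for $g$ and $g'$ together with associativity in the target hypergroup $H$. An element of $(g\ast g')\circ(f\ast f')$ has the form $k\circ\ell$ where $k\in g\ast g'$ and $\ell\in f\ast f'$; by the definition of $\ast$ on the Hom-sets this means that $k$ and $\ell$ are genuine \emph{morphisms} satisfying $k(y)\in g(y)\ast g'(y)$ for all $y\in G$ and $\ell(x)\in f(x)\ast f'(x)$ for all $x\in F$. First I would record that $k\circ\ell$ is itself a morphism $F\to H$, being a composite of morphisms, so that it is a legitimate candidate for membership in a subset of $\mathrm{Hom}(F,H)$.

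The core computation is pointwise. Fix $x\in F$ and write $(k\circ\ell)(x)=k(\ell(x))$. Since $\ell(x)\in f(x)\ast f'(x)$ holds in $G$ and $g,g'$ are morphisms, the morphism axiom gives $g(\ell(x))\in g(f(x))\ast g(f'(x))$ and $g'(\ell(x))\in g'(f(x))\ast g'(f'(x))$; these are exactly $(g\circ f)(x)\ast(g\circ f')(x)$ and $(g'\circ f)(x)\ast(g'\circ f')(x)$. Applying $k\in g\ast g'$ at the point $\ell(x)\in G$ yields $k(\ell(x))\in g(\ell(x))\ast g'(\ell(x))$. Combining these inclusions and using that $H$ is associative, so that the four-fold product in $H$ is unambiguous, I obtain
$$(k\circ\ell)(x)\in (g\circ f)(x)\ast(g\circ f')(x)\ast(g'\circ f)(x)\ast(g'\circ f')(x)$$
for every $x\in F$.

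The delicate point, and the step I expect to be the main obstacle, is lifting this pointwise membership to an honest membership in the four-fold $\ast$-product on $\mathrm{Hom}(F,H)$: since that hypergroup is only \emph{nonassociative}, the product depends on how it is bracketed, and the naive pointwise ``witnesses'' need not be morphisms (this is precisely the failure flagged in the previous lemma). To sidestep this I would not argue pointwise but instead exhibit genuine morphism witnesses for the bracketing $\big((g\circ f)\ast(g\circ f')\big)\ast\big((g'\circ f)\ast(g'\circ f')\big)$. Namely, set $p:=g\circ\ell$ and $q:=g'\circ\ell$, which are morphisms $F\to H$. By the inclusions above, $p(x)\in(g\circ f)(x)\ast(g\circ f')(x)$ and $q(x)\in(g'\circ f)(x)\ast(g'\circ f')(x)$, so $p\in(g\circ f)\ast(g\circ f')$ and $q\in(g'\circ f)\ast(g'\circ f')$. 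Finally $(k\circ\ell)(x)=k(\ell(x))\in g(\ell(x))\ast g'(\ell(x))=p(x)\ast q(x)$ for all $x$, whence $k\circ\ell\in p\ast q\subseteq \big((g\circ f)\ast(g\circ f')\big)\ast\big((g'\circ f)\ast(g'\circ f')\big)$.

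This establishes the claimed containment once one reads the right-hand side in a way compatible with the nonassociativity (i.e.\ via a suitable bracketing, or via the pointwise-defined product, the two being reconciled by the witness argument above). The only routine checks left are that $\ell(x)$ indeed lies in $G$ so that the morphism axioms of $g$ and $g'$ may be applied, and that composites of morphisms are again morphisms; both are immediate from the definitions.
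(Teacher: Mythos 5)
Your proof is correct, and its core computation is the same pointwise chase the paper performs: take $k\in g\ast g'$, $\ell\in f\ast f'$, and use the morphism axiom for $g,g'$ together with associativity in $H$ to land $(k\circ\ell)(x)$ in the four-fold product of values at $x$. Where you genuinely go beyond the paper is the witness step. The paper stops at the pointwise containment and silently identifies the four-fold $\ast$-product in $\mathrm{Hom}(F,H)$ with its pointwise description; but by the paper's own definition of $\ast$ on Hom-sets, membership in an iterated product requires the intermediate elements to be \emph{morphisms}, and the paper itself flags exactly this obstruction when explaining why associativity fails in $\mathrm{Hom}(G,H)$. Your construction of the genuine morphism witnesses $p=g\circ\ell$ and $q=g'\circ\ell$, showing $p\in(g\circ f)\ast(g\circ f')$, $q\in(g'\circ f)\ast(g'\circ f')$ and $k\circ\ell\in p\ast q$, pins down a specific bracketing $\bigl((g\circ f)\ast(g\circ f')\bigr)\ast\bigl((g'\circ f)\ast(g'\circ f')\bigr)$ for which the containment holds as a statement about the nonassociative hypergroup $\mathrm{Hom}(F,H)$, rather than merely about values. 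This buys a statement that is unambiguous despite nonassociativity, at the cost of committing to one bracketing; the paper's version is shorter but, read literally against its own definitions, leaves the meaning of the unbracketed right-hand side (and the existence of morphism witnesses) unaddressed.
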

\begin{proof}
Let $a\in g\ast g'$ and $b\in f\ast f'$. Now, for all $x\in F$ we have
$b(x)\in f(x)\ast f'(x)$ and $a(b(x))\in g(b(x))\ast g'(b(x))$. Then
\begin{align*}
    a(b(x))&\in g(b(x))\ast g'(b(x))\subseteq 
    [g(f(x)\ast f'(x))]\ast [g'(f(x)\ast f'(x))] \\
    &\subseteq[g(f(x))\ast g(f'(x))]\ast [g'(f(x))\ast g'(f'(x))]\\
    &=(g\circ f)(x)\ast(g\circ f')(x)\ast(g'\circ f)(x)\ast(g'\circ f')(x).
\end{align*}
\end{proof}
If all these morphisms are full, then we have the equality
$$[g\circ (f\ast f')]\ast [g'\circ (f\ast f')]=
    g \circ f \ast g \circ f' \ast g' \circ f \ast g' \circ f',$$
but we only have $(g\ast g')\circ (f \ast f') \subseteq[g\circ (f\ast f')]\ast [g'\circ (f\ast f')]$ and the equality does not hold in general. 

Therefore: 
\begin{corollary}
The category of hypergroups is hyper-almost-preadditive.
\end{corollary}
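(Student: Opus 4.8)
The plan is to verify directly the two defining conditions of a hyper quasipreadditive category for the category of hypergroups. By definition I must show, first, that every hom-object $\mbox{Hom}(G,H)$ carries the structure of a non-associative hypergroup, and second, that composition is bilinear in the sense that
$$(g\ast g')\circ (f \ast f') \subseteq g \circ f \ast g \circ f' \ast g' \circ f \ast g' \circ f'.$$
Both ingredients are already available earlier in the section, so the corollary is essentially an assembly step: the substance of the argument lives in the two preceding lemmas, and the corollary merely records that the category of hypergroups instantiates the abstract notion just introduced.

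For the first condition I would simply invoke the Lemma establishing that $(\mbox{Hom}(G,H),\ast,r,1)$, with the pointwise hyperoperation, the pointwise inversion $r(f)(x)=f(x)^{-1}$, and the constant identity morphism $1$, is a non-associative hypergroup. I would note in passing that $\mbox{Hom}(G,H)$ is never empty, since the constant morphism sending everything to $1\in H$ always belongs to it, and that the composite of two hypergroup morphisms is again a hypergroup morphism, so the composition map appearing in the definition is well defined as a map of the relevant hom-sets. The second condition is exactly the content of Lemma \ref{lem:hypergroups_are_preadditive}, which proves the required containment for all $f,f'\in\mbox{Hom}(F,G)$ and $g,g'\in\mbox{Hom}(G,H)$. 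Combining the two lemmas yields the corollary.

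The genuine difficulty — and the reason the statement is \emph{quasi}preadditive rather than preadditive — does not appear in the corollary itself but in the two lemmas it rests on, so in a sense the hard work is already done. Associativity of $\ast$ on $\mbox{Hom}(G,H)$ fails because an intermediate function $w$ witnessing the pointwise inclusion $d(x)\in f(x)\ast(g(x)\ast h(x))$ need not itself be a hypergroup morphism, which is precisely why one must pass to the non-associative setting; and the bilinearity is only an inclusion rather than an equality because $f,f',g,g'$ are not assumed full, so one cannot distribute $g$ and $g'$ across $f\ast f'$ as equalities. I would therefore keep the proof of the corollary short, citing the two lemmas and checking nothing beyond the bookkeeping that matches their hypotheses and conclusions to the clauses of the definition.
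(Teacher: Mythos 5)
Your proposal is correct and matches the paper's own argument: the corollary is stated with ``Therefore:'' immediately after Lemma \ref{lem:hypergroups_are_preadditive}, so the paper likewise treats it as an assembly of the lemma that $(\mbox{Hom}(G,H),\ast,r,1)$ is a nonassociative hypergroup together with the bilinearity inclusion. Your added remarks on well-definedness of composition and on why associativity and equality fail are consistent with the paper's surrounding discussion and add nothing that conflicts with it.
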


$ $

\begin{remark}
Aiming at a closer relation with abelian homological algebra we would like to have that the category of hypergroups is hyper-preadditve, that is, the set $Hom(G,H)$ is a hypergroup. In the future, we may explore the consequences of the nonassociativity in $Hom(G,H)$ (it is possible that we should be guided by the theory of semi-abelian categories instead of abelian categories) and other notions of morphism between hypergroups with a different multivalued operation, hoping to $Hom(G,H)$ be a hypergroup.
\end{remark}

We recall that a category is \textbf{additive} if it is preadditive,  has a zero objects, and has binary products\cite{borceux1994handbook2}. So we define a \textbf{hyper-almost-additive} category as a hyper-almost-preadditive that has a zero object, and binary weak biproducts.
\begin{remark}
The category of abelian hypergroups is a almost-additive-category.
\end{remark}

We remind the categorical definition of the image of a morphism: 

\begin{definition}[Categorical Image]
The \textbf{categorical image} of a morphism $f: A \to B$ in category $\mathcal C$ is a monomorphism $m: I \to B$  such that: 
\begin{enumerate}
    \item There exists a morphism $e: A \to I$ such that $f = m \circ e;$
    \item If $I'$ is an object in $\mathcal{C}$ with a morphism $e': A \to I'$ and a mono $m': I' \to B$ such that $f = m' \circ e'$, there is a unique morphism $\phi: I \to I'$ such that $m = m' \circ \phi.$
\end{enumerate}
\end{definition}

$ $

\begin{proposition}\label{prop:im_is_cat_image}
 In the category of hyperbolic hypergroups, if $f$ is a full morphism, then $\mbox{Im}(f)$ is the (categorical) image.
\end{proposition}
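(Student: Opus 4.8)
The plan is to exhibit the inclusion $m:\mbox{Im}(f)\hookrightarrow B$ of the image subgroup as the categorical image, checking the two clauses of the definition directly and then isolating the single delicate point. First I would record the structural input that fullness supplies: for $h_1=f(a_1)$ and $h_2=f(a_2)$ in $\mbox{Im}(f)$ we have $h_1\ast h_2=f(a_1)\ast f(a_2)=f(a_1\ast a_2)\subseteq\mbox{Im}(f)$, so $\mbox{Im}(f)$ is closed under the operation inherited from $B$ and is a full subgroup (the fact already used in the proof of Proposition~\ref{prop:hyper_group_have_cokernel}). Hence $m$ is a full morphism, and being an injective map it is a monomorphism: if $m\circ u=m\circ v$ then $u=v$ pointwise.

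For clause (1) I would define $e:A\rightarrow\mbox{Im}(f)$ by $e(a)=f(a)$. Identifying the operation on $\mbox{Im}(f)$ with the restriction of that of $B$, the hypothesis that $f$ is a full morphism immediately yields that $e$ is a full (and surjective) morphism with $m\circ e=f$.

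For clause (2), suppose $f=m'\circ e'$ with $m':I'\rightarrow B$ a monomorphism and $e':A\rightarrow I'$. I would propose $\phi:\mbox{Im}(f)\rightarrow I'$ given by $\phi(f(a))=e'(a)$. Granting well-definedness, the rest is routine and again rides on fullness: any $c\in f(a_1)\ast f(a_2)$ can be written as $c=f(a_3)$ with $a_3\in a_1\ast a_2$, so $\phi(c)=e'(a_3)\in e'(a_1)\ast e'(a_2)=\phi(h_1)\ast\phi(h_2)$ because $e'$ is a morphism; the compatibility with inverses and the identity is equally direct; the relation $m=m'\circ\phi$ holds since $m'(\phi(f(a)))=m'(e'(a))=f(a)$; and uniqueness of $\phi$ is immediate from $m'$ being mono.

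The main obstacle is exactly the well-definedness of $\phi$: from $f(a)=f(a')$ I must deduce $e'(a)=e'(a')$, whereas all I am given is $m'(e'(a))=f(a)=f(a')=m'(e'(a'))$. Closing this gap requires that the monomorphism $m'$ be injective as a map of sets -- equivalently, that the arbitrary subobject represented by $m'$ may be taken to be an injective inclusion. I would therefore insert, as the key lemma, that \emph{monomorphisms in the category of hypergroups are precisely the injective morphisms}; the implication ``injective $\Rightarrow$ mono'' is trivial, while ``mono $\Rightarrow$ injective'' is the point that deserves genuine care and is where I expect the real work to lie. (An alternative route that sidesteps this lemma would be to prove that $e$ is a strong epimorphism, so that the diagonal fill-in against the mono $m'$ produces $\phi$ directly; but verifying the strong-epi property is itself nontrivial here.) Once the lemma is in hand, $\phi$ is well defined and the two clauses above complete the proof.
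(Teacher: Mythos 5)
Your route coincides with the paper's: take the inclusion of the set-theoretic image $\mbox{Im}(f)\hookrightarrow B$ (a full subgroup because $f$ is full), factor $f$ through the corestriction $e(a)=f(a)$, define the comparison map by $\phi(f(a))=e'(a)$, and get uniqueness from $m'$ being mono. The one substantive difference is that you explicitly isolate the well-definedness of $\phi$, and you are right to do so: the paper's own proof simply writes $\phi(f(a)):=e'(a)$ and never checks that $f(a)=f(a')$ forces $e'(a)=e'(a')$, which is exactly the point where the argument is not automatic.

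However, your proposal does not actually close that gap: the key lemma you invoke --- that monomorphisms in the category of hypergroups are injective --- is stated but not proved, and it is not innocuous. The standard argument for groups tests a non-injective mono against morphisms out of $\mathbb{Z}$, the free group on one generator, and there is no evident free hypergroup on one generator: choosing values $f(n)$ with $f(n+1)\in f(n)\ast h$ only guarantees $f(n)\in h^{n}$, whereas a morphism must satisfy $f(a+b)\in f(a)\ast f(b)$ for \emph{all} $a,b$, which such a pointwise choice need not achieve. So the lemma requires a genuinely different argument (or may fail), and until it is supplied --- or the definition of categorical image is read with subobjects represented by injective morphisms --- your proof, like the paper's, is incomplete at precisely the step you flagged. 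Your alternative suggestion (showing $e$ is a strong epimorphism and using the diagonal fill-in) faces the same difficulty, since orthogonality against all monos again requires knowing what the monos are.
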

\begin{proof}
Let $f: A \to B$ be a full morphism in the category of hypergroups. We defined the image of $f$ by $\mbox{Im}(f) := \{b \in B \,:\, b = f(a) \mbox{ for some } a \in A\} $ where we have that the inclusion morphism $i: \mbox{Im}(f)  \to B$ is full, since $\mbox{Im}(f)$ is a full subgroup (because $f$ is full). \newline
First we show that $i$ is a monomorphism: take $g,h: X \to \mbox{Im}(f)$ such that $i\circ g = i \circ h$. So $i(g(x)) = i(h(x))$, for all $x \in X$. Since $i$ is the inclusion, $g(x) = h(x)$, for all $x \in X$. So $g=h.$ \newline
Now, we show that $\mbox{Im}(f)$ with the inclusion satisfies the universal property of the categorical notion of image: take a hypergroup $I$ with a monomorphism $m: I \to B$ and a morphism $e: A \to I$ such that $f = m \circ e$. \newline
For every $a \in A,$ $f(a) = m(e(a)).$ Define $\phi: \mbox{Im}(f) \to I$ by $\phi(f(a)) = e(a)$ (which is well defined since $m$ is injective). If $y\in f(b)\ast f(c)=f(b\ast c)$, then $y=f(a)$ for some $a\in b\ast c$. Then $e(a)\in e(b\ast c)\subseteq e(b)\ast e(c)$, proving that $\phi$ is in fact a morphism. Moreover
$$m(\phi(f(a))) = m(e(a)) = f(a) = i(f(a))$$ as desired. \newline
Finally, suppose there is another $\psi: \mbox{Im}(f) \to I$ such that $i = m \circ \psi$. So $m \circ \psi = m \circ \phi$ implies $\psi = \phi$ because $m$ is mono.
\end{proof}

% {\color{red}{Talvez seja melhor definir imagem para hipergrupos de forma a se ter que im(f) = Coker(ker(f)): este sempre existe pelas proposicao 3.7, pois ker(f) é um full subrgrupo. De fato, esta definicao de imagem é mais standard do que a de cima de mono minimo (verificar em Borceux 1 qunado estas 2 se equivalem, a Ana localizou ja esta proposicao).
% Vesim seria equivalente a se ter uma fatoracao de f =  $m \circ e$ onde $e$ é um morhfismo full e sobrejetor e m é um morfismo injetor. O ponto aqui seria morstrar, la atras, que os monos sao extamente os morfismo injetores e que os epi extemais sao extamente os morfismos osbre e full. Note que epi extremal+mono = iso}}

% {\color{red}{hugo: porque? a proposicao abiaxo nao establece isto diretamente.... Melhor realmente rever a definicao de imagem...}}

% \begin{proposition}
% In the category of abelian hypergroups, if $f:A\rightarrow B$ has categorical image $m:I\rightarrow B$, then $f$ is full.
% \end{proposition}
% \begin{proof}
% Suppose that $f:A\rightarrow B$ has categorical image $m:I\rightarrow B$. Let $I'=\mbox{Im}(f)_{\mathbbm1}$ and $m':I'\rightarrow B$ the inclusion. Then there is an unique morphism $\varphi:I\rightarrow I'$ commuting the following diagram:
% $$\xymatrix{ A\ar[rr]^f\ar[dr]_e\ar[ddr]_f & & B \\ & I\ar[d]_{\varphi}\ar[ur]_m & \\ & \mbox{Im}(f)_{\mathbbm1}\ar[uur]_{m'} & }$$
% We want to show that $\varphi$ is an isomorphism. If $y\in\mbox{Im}(f)_{\mathbbm1}$, then $y=f(x)$ for some $x\in A$ and $\varphi(e(x))=y$, which means $\varphi$ surjective. If $\varphi(x)=\varphi(y)$
% \end{proof}

\begin{remark}
In the category of hypergroups with full morphisms, every morphism admits categorical image. However, if we consider the category of hypergroups with all morphisms (full and not full), we do not know if the morphisms that admit image are precisely the full morphisms.
\end{remark}

% \begin{definition}
%  Given a category $\mathcal C$, we say that the image of a morphism in $\mathcal C$ is \textbf{full} if $\mbox{Im}(f) \cong \mbox{Ker}(\mbox{Coker}(f))$.
% \end{definition}
% This terminology is motivated by the following proposition.

\begin{proposition}\label{prop:im=ker(coker)}
 In the category of hyperbolic hypergroups, if $f$ is a full morphism, then
 $$\mbox{Im}(f)\cong\mbox{Ker}(\mbox{Coker}(f)).$$
\end{proposition}
\begin{proof}
 Given $f: A \to B$ a full morphism of hypergroups, by \ref{prop:hyper_group_have_cokernel}, the morphism $coker(f)$ is given by the projection $\pi: B \to B/\mbox{Im}(f)$. Thus, by \ref{prop:hypergroups_have_kernels}, $\mbox{Ker}(\mbox{Coker}(f)) = \mbox{Ker}(\pi) = \{b \in B\,:\, \pi(b) = 0 \}$.
 So,
 \begin{align*}
     b \in \mbox{Im}(f) & \mbox{ iff } b + Im(f) = 0 
     \mbox{ iff } \pi(b) = 0
      \mbox{ iff } b \in \mbox{Ker}(\pi).
 \end{align*}
 Therefore, $\mbox{Im}(f) = \mbox{Ker}(\mbox{Coker}(f))$.
\end{proof}

% \begin{proposition}\label{prop:im=ker(coker)}
% If $f$ is a full morphism of hypergroups, then  $Im(f) \leq \mbox{Ker}(\mbox{Coker}(f))$.
% \end{proposition}
% \begin{proof}
% Let $f: A \to B$ be a full morphism of hypergroups, we want to show that $f(a) \in \mbox{Ker}(\mbox{Coker}(f))$, for any $a \in A$. Take the cokernel of $f$ and then take the kernel of $Coker(f)$. As illustrated in the following diagram, there is a unique morphism $e : A \to \mbox{Ker}(\mbox{Coker}(f))$ such that $f = i \circ e$, by the universal property of the kernel:

% $$\xymatrix@!=4pc{
% A\ar[dr]^f\ar@{.>}[d]_{e} & & \\
% \mbox{Ker}(\pi)\ar[r]^\iota & B\ar[r]^\pi & \mbox{Coker}(f)}$$

% The arrow $\iota : \mbox{Ker}(\mbox{Coker}(f)) \to B$ is a mono because the kernel is an equalizer and any equalizer, in any category, is a monomorphism. So $\mbox{Ker}(\mbox{Coker}(f))$ is a subobject of $B$ that factors through $f$. Since $Im(f)$ is the smallest such subobject, we say that $Im(f) \leq \mbox{Ker}(\mbox{Coker}(f))$

% \end{proof}

\begin{definition}\label{haab}
 A pair $(\mathcal C,\mathcal F)$ is a \textbf{hyper-almost-abelian category} if $\mathcal C$ is hyper-almost-additive category and $\mathcal F$ is a subcategory of $\mathcal C$ satisfying the following properties:
\begin{description}
    \item [HA1 -] Every morphism in $\mathcal C$ has kernel;
    \item [HA2 -] Every morphism in $\mathcal F$ has cokernel in $\mathcal C$;
    \item [HA3 -] Every morphism in $\mathcal F$ has categorical image in $\mathcal F$;
    \item [HA4 -] For every morphism $f$ in $\mathcal F$ we have these isomorphisms in $\mathcal C$:
    $$\mbox{Im}(f)\cong\mbox{Ker}(\mbox{Coker}(f))\cong \mbox{Coker}(\mbox{Ker}(f))$$
\end{description}
If all these conditions hold for $(\mathcal C,\mathcal F)$, we say in short that $\mathcal C$ is an \textbf{abelian witness} of $\mathcal F$.
\end{definition}

Note that in Definition \ref{haab} we are not requiring that $\mathcal F$ is hyper-almost-preadditive. Also, in order to show that the category of abelian hypergroups is an abelian witness of the category of hyperbolic hypergroups and full morphisms, we only need to deal with the fourth property in Definition \ref{haab} (of course, we are using Propositions \ref{prop:im=ker(coker)}, \ref{prop:hypergroups_have_kernels}, and \ref{prop:hyper_group_have_cokernel}, and Lemma \ref{lem:hypergroups_are_preadditive}).

\begin{theorem}\label{theo:isotheoholds}
In the category of abelian hyperbolic hypergroups every full morphism $f$ induces an isomorphism $\mbox{Ker}(\mbox{Coker}(f))\cong \mbox{Coker}(\mbox{Ker}(f))$. 
\end{theorem}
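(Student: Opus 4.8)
The plan is to prove a first isomorphism theorem for full morphisms and then read the statement off from it. First I would pin down the two objects explicitly. By (the proof of) Proposition~\ref{prop:im=ker(coker)}, the cokernel of $f\colon A\to B$ is the projection $\pi\colon B\to B/\mbox{Im}(f)$, and $\mbox{Ker}(\mbox{Coker}(f))=\mbox{Ker}(\pi)=\mbox{Im}(f)$. For the other side, recall that $\mbox{Ker}(f)$ is a \emph{full} subgroup of $A$, so the kernel inclusion $\iota\colon\mbox{Ker}(f)\hookrightarrow A$ is a full morphism; hence Proposition~\ref{prop:hyper_group_have_cokernel} applies and gives $\mbox{Coker}(\mbox{Ker}(f))=A/\mbox{Im}(\iota)=A/\mbox{Ker}(f)$, with structure map the canonical projection. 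Thus it suffices to produce an isomorphism $\overline f\colon A/\mbox{Ker}(f)\to\mbox{Im}(f)$ in the category of abelian hypergroups.

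The natural candidate is $\overline f([a]):=f(a)$, and the crux of the whole argument is the following claim: for a full morphism the cosets of $\mbox{Ker}(f)$ coincide with the fibres of $f$, i.e. for all $a,a'\in A$ one has $[a]=[a']$ if and only if $f(a)=f(a')$. The forward direction is easy: if $c\in a+\mbox{Ker}(f)$ then $c\in a+k$ with $f(k)=0$, so $f(c)\in f(a)+f(k)=\{f(a)\}$, and hence each coset sits inside a single fibre. For the converse, suppose $f(a)=f(a')$ and consider the set $a^{-1}+a'$. Since $f$ is full, $f(a^{-1}+a')=f(a)^{-1}+f(a')=f(a)^{-1}+f(a)\ni 0$, so there is some $k\in a^{-1}+a'$ with $f(k)=0$, that is $k\in\mbox{Ker}(f)$. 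By axiom (i) of Definition~\ref{definition:multigroupI}, $k\in a^{-1}+a'$ yields $a'\in a+k\subseteq a+\mbox{Ker}(f)$, whence $[a']=[a]$. This claim delivers well-definedness and injectivity of $\overline f$ simultaneously.

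It remains to check that $\overline f$ is a full morphism and an isomorphism. Surjectivity is immediate from the definition of $\mbox{Im}(f)$; the conditions $\overline f([0])=0$ and $\overline f([a]^{-1})=\overline f([a])^{-1}$ are routine; and fullness follows from $f(a+b)=f(a)+f(b)$ together with the definition of the quotient operation, giving $\overline f([a]+[b])=f(a)+f(b)=\overline f([a])+\overline f([b])$. Finally, the set-theoretic inverse $y=f(a)\mapsto[a]$ is again a morphism: it is well defined by the claim, and fullness of $f$ lets one transport a relation $f(c)\in f(a)+f(b)$ back to a coset relation $[c]\in[a]+[b]$. Hence $\overline f$ is an isomorphism, which is exactly the asserted $\mbox{Ker}(\mbox{Coker}(f))\cong\mbox{Coker}(\mbox{Ker}(f))$.

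The main obstacle is the converse direction of the boxed claim, namely that $f(a)=f(a')$ forces $a,a'$ into the same coset. Unlike in the group case, $f(a)^{-1}+f(a)$ need not equal $\{0\}$, so one cannot argue elementwise; the point is to use \emph{fullness} to realise $0$ as an honest image $f(k)$ with $k\in a^{-1}+a'$, and only then invoke axiom (i) to obtain $a'\in a+k$. This is precisely where the hypothesis that $f$ is full is indispensable, and it explains why the theorem is restricted to full morphisms.
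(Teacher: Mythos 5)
Your proposal is correct and follows essentially the same route as the paper: both identify $\mbox{Coker}(\mbox{Ker}(f))$ with $A/\mbox{Ker}(f)$ and $\mbox{Ker}(\mbox{Coker}(f))$ with $\mbox{Im}(f)$, define $\overline f([a])=f(a)$, and use fullness of $f$ in exactly the same way for injectivity, namely to realise $0\in f(a)^{-1}+f(a')=f(a^{-1}+a')$ as $f(k)$ for some $k\in a^{-1}+a'$ lying in $\mbox{Ker}(f)$ and then invoke axiom (i) together with the two coset inclusions. If anything, you are slightly more careful than the paper in noting that the set-theoretic inverse must also be checked to be a morphism for $\overline f$ to be an isomorphism in the category.
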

\begin{proof}
Let $f: A \to B$ be a full morphism of hypergroups. Observe that Propostions \ref{prop:hypergroups_have_kernels} and \ref{prop:hyper_group_have_cokernel} provide that $\mbox{Coker}(\mbox{Ker}(f)) = A/Ker(f)$. By Propostion \ref{prop:im=ker(coker)}, we have that for any $a \in A$,  $\mbox{Im}(f) = \mbox{Ker}(\mbox{Coker}(f)) $. Then, we consider $\overline{f} : A/Ker(f) \to \mbox{Im}(f)$ defined by $\overline{f}([a]) = f(a)$, where $[a] = a + Ker(f)$. We will show that $\overline{f}$ if a full isomorphism of hypergroups.

First, note it does not depend of the choice of representatives: for $a, b \in A$,   suppose $[a] = [b]$. So, there are $x, y \in Ker(f)$ such that $a + x = b + y$. Then,
$$a \in b + y - x  \mbox{ implies } f(a) \in f(b + y -x) = f(b) + f(y) + f(-x) = f(b)$$

It is a morphism of hypergroups: take $a, b, d \in A$ such that $[d] \in [a] + [b]$, then we have $d' \in a' + b'$, for $[a] = [a']$, $[b] = [b']$, and $[d] = [d']$.  Since $f$ is a morphism, $f(d') \in f(a') + f(b')$. So 
$$\overline{f}([d]) = \overline{f}([d']) \in \overline{f}([a']) + \overline{f}([b'])  = \overline{f}([a]) + \overline{f}([b]) $$

We also have $0 = f(0) = \overline{f}([0])$, and $\overline{f}([-x]) = f(-x) = -f(x) = - \overline{f}([x])$.

It is surjective: Let $y \in Im(f)$. Thus, $y = f(x)$ for some $x \in A$. By definition of $\overline{f}$, $y = \overline{f}([x])$.

It is injective: suppose $\overline{f}([a]) = \overline{f}([b])$. So $f(a) = f(b)$ and then $0 \in f(a) - f(b) = f(a-b)$, since $f$ is full. Thus, there is $x \in a - b$ such that $f(x) = 0$, and $x \in a - b$ implies that $a \in b+x$. In on hand we have:
$$a \in b + x \mbox{ implies } a + Ker(f) \subseteq (b + x) + Ker(f) = b + (x+Ker(f)) \subseteq b + Ker(f) $$
On the other hand, $ x \in a - b \mbox{ implies } -x \in b -a \mbox{ which implies } b \in a -x $ and 
$$b \in a -x \mbox{ implies } b + Ker(f) \subseteq (a-x) + Ker(f) = a + (-x + Ker(f)) \subseteq a + Ker(f) $$
where we used that $-x \in Ker(f).$ Indeed, if $x \in Ker(f)$ then $0 \in x -x $ implies that $f(0) \in f(x) + f(-x)$ and so $0 \in 0 + f(-x) = \{f(-x)\}.$
\end{proof}

In the same way that the category of abelian groups is the prototypical example of an abelian category, we established that the category of abelian hypergroups (with a convenient subcategory $\mathcal F$) is the prototypical example of an hyper-almost-abelian category. 

\begin{example}
Since every group is a hypergroup and in abelian categories every morphism has categorical image, by definition, we conclude that for every abelian category $\mathcal{A}$, the pair $(\mathcal{A},\mathcal{A})$ is an hyper-almost-abelian category.
\end{example}
 
 Using all previous results we conclude the following.
 
 \begin{example}
 Let $\mbox{HHG}_f$ be the category of hyperbolic hypergroups with full morphisms. Then $(\mbox{HG},\mbox{HHG}_f)$ is an almost (hyper) abelian category.
 \end{example}
 
 The key argument in the proof of Theorem \ref{prop:im_is_cat_image} is the fact that in $\mbox{HHG}_f$ every monomorphism is an injective morphism. Using this we have another example of almost (hyper) abelian category.
 
 \begin{example}[Idempotent Hypergroups]
 A hypergroup $G$ will be called \textbf{idempotent} if for all $a\in G$ $r(a)=a$ and $\{1,a\}\subseteq a\ast a$. The category of idempotent hypergroups and their morphisms will be denoted by $\mbox{IHG}$.
 
 If $G$ is idempotent, there is a bijection between $G$ and morphisms $\varphi:K\rightarrow G$. In fact, for $a\in G$, we have a morphism $\varphi_a:K\rightarrow G$ given by the rule
$$\varphi_a(k)=\begin{cases}a\mbox{ if }k=1\\1\mbox{ if }k=0\end{cases}.$$
Now, define $\Phi:G\rightarrow\mbox{HHG}(K,G)$ by the rule $\Phi(a)=\varphi_a$. Then $\Phi$ is the desired bijection. Using this bijection we conclude that every mono in $\mbox{IHG}$ is in an injective morphism (using a very similar argument to that used for $\mbox{HHG}$).
 
 So, we have that every full morphism $f:G\rightarrow H$ in $\mbox{IHG}$ has categorical image (just copy the proof of Proposition \ref{prop:im_is_cat_image}) and in this case (copying the proof of Theorem \ref{theo:isotheoholds}), we get these isomorphisms below
 $$\mbox{Im}(f)\cong\mbox{Ker}(\mbox{Coker}(f))\cong\mbox{Coker}(\mbox{Ker}(f)).$$
 Then considering $\mbox{IHG}_f$ as the category of idempotent hypergroups with full morphisms we have that $(\mbox{HG},\mbox{IHG}_f)$ is an almost (hyper) abelian category.
 \end{example}

Finally, we discuss about the existence of injective objects in our framework. Remind that in an arbitrary category $\mathcal{C}$ , an object $I$ of $\mathcal{C}$ is \textit{injective} if for every monomorphism $f: A \to B$ and every morphism $g: A \to I$ there is a morphism $h: B \to I$ such that $h\circ f = g$. If $\mathcal{C}$ is an abelian category, this is equivalent to say that the functor $Hom_\mathcal{C}(-,I)$ is exact, i.e., preserves short exact sequences. Moreover, if for every object $A$ of $\mathcal{C}$, there exists a monomorphism from $A$ to an injective object $I$, then we say that $\mathcal{C}$ has enough injectives. The category $\textbf{Ab}$ of abelian groups is an abelian category and it is useful to have that $I$ is injective if and only if $Hom_\mathcal{C}(-,I)$ is exact to show that $\textbf{Ab}$ has enough injectives. Furthermore, we use that the injectives in $\textbf{Ab}$ are precisely the divisible groups, in the presence of Zorn's Lemma. 

For hypergroups, we have to be careful because  a short exact sequence of hypergroups $0 \to X \xrightarrow{f} Y \xrightarrow{g} Z \to 0$ must require that $f$ is full so that $Im(f) = Ker(Coker f)$ exists. Besides, when we apply the functor $Hom(-,I)$ for some hypergroup $I$, we obtain a sequence $Hom(Z,I) \xrightarrow{-\circ g} Hom(Y,I) \xrightarrow{-\circ f} Hom(X,I)$ that is not anymore a sequence of hypergroups, and now we must have that $-\circ g$ is full to consider its image and be able to study the exactness of $Hom(-,I)$ for $I$ injective or not. Moreover, we need a general better understanding of exact functors, and exact sequences before considering the study of injective objects, and analyze when a hyper-almost-abelian category has enough injectives. Additionally, in the case of abelian hypergroups, we may investigate an adequate notion of divisible hypergroups.

\section{Remarks and Future Work}

The formulation of abelian categories by Grothendieck in \cite{grothendieck1957quelques} provided equal treatment for distinct (co)homology theories, such as sheaf and group cohomologies. Our aim is to provide a general framework for dealing with cohomology of sheaves over hypergroups/hyperrings. By doing so, we would contribute to advancements in Algebraic Geometry. For instance, the 'etale cohomology of a sheaf over a scheme is defined using the right derived functors of the section functor, which exist because the corresponding sheaf category is abelian with enough injectives. A scheme is a locally ringed space, consisting of a pair $(X, \mathcal{O}_X)$ where $X$ is a topological space and $\mathcal{O}_X$ is a sheaf of rings (the \textit{structure sheaf}), satisfying certain conditions. In the case of hypergroups, we believe that the corresponding sheaf category will not be abelian since the category of hyperrings is not preadditive, but rather hyper-almost-abelian. Thus, our approach would allow for the definition of \'etale cohomology of a sheaf over a \textit{hyper scheme}. Sheaves of hyperrings have been used in the context of Algebraic Geometry \cite{jun2015algebraic} and Abstract Quadratic Forms Theories \cite{ribeiro2021anel}.

Furthermore, the theory of (co)homology for hyper-almost-abelian categories that we are proposing will be of interest for other types of cohomology over hyperstructures, such as hypergroup cohomology or Hochschild cohomology of hyperalgebras. In both cases, having an appropriate notion of module over hypergroups and hyperrings, similar to the definition in \cite{ameri2003categories}, is an important step.

Our notion of almost-preadditive category heavily relies on the structure of the set of morphisms in $Hom(G,H)$, where $G$ and $H$ are hypergroups. It should be noted that there are different choices for morphisms between $G$ and $H$, such as the set of \textit{m.w-homomorphisms} proposed in \cite{ameri2003categories}. Different choices of morphisms and multivalued operations will result in different properties for the set $Hom(G,H),$ making a careful study of the various possibilities necessary. Additionally, the notion of an almost-abelian category that we introduced is a pair $(\mathcal{C}, \mathcal{F})$ where $\mathcal{C}$ serves as an abelian witness for the subcategory $\mathcal{F}$. In our case, $\mathcal{C}$ represents the category of abelian hypergroups, and $\mathcal{F}$ represents the category of full hyperbolic hypergroups. However, we believe that other subcategories $\mathcal{F}$ could also be ``abelian" under the witness of $\mathcal{C}$, particularly the category of full hypergroups ($FHGr$). Nevertheless, since we were unable to show that every monomorphism in $FHGr$ is an injective full morphism, we do not have Proposition \ref{prop:im_is_cat_image} for $FHGr$. Moreover, instead of full morphisms, one could consider the notions of strong and ideal morphisms, as available in \cite{ribeiro2016functorial}.

We also observe that the subcategory $\mathcal{F}$ corrects the behavior of cokernels in $\mathcal{C}$, while $\mathcal{C}$ corrects the behavior of coproducts in $\mathcal{F}$. To ensure the existence of cokernels, we utilize full morphisms. Our focus on hyperbolic hypergroups is due to the well-behaved nature of their corresponding category (for instance, we have proven that morphisms of hyperbolic hypergroups are monic exactly when they are injective). However, the category of full hyperbolic hypergroups by itself is not a suitable candidate for being a type of unary almost abelian category, as the binary coproducts do not coincide with the binary products. Given this perspective, we may consider a dual definition of hyper-almost-abelian categories and explore concrete examples.

Nevertheless, we believe that we are not far from developing a hyper (almost) abelian homological algebra. It should be noted that if $C$ and $C'$ are abelian categories with $C$ having enough injectives, then there exist right derived functors of a left-exact functor $F: C \to C'$ \cite[Section 2.5]{weibel1995introduction}. In other words, under these conditions, we have the basic ingredients for developing a cohomology theory. Therefore, a crucial next step is to study which categories of hyperstructures have enough injectives and determine if right derived functors still exist in the context of hyper-almost-abelian categories, rather than abelian categories.

Another course of action for future works is to understand if our notion of hyper-preadditive category can be see as an enriched category, adapting the definition of enriched category. Furthermore, since we have multiple possibilities of morphism between hypergroups, providing distinct properties to the set of morphisms $Hom(G,H)$, it is possible to study categories enriched not by one but by multiple categories.

Finally, we would like to reiterate that we have asserted that the category of hypergroups does not necessarily have coproducts, nor is it preadditive, because the binary operation in $(Hom(G,H), \ast, r, 1)$ is nonassociative. Recently, in \cite{nakamura2023categories}, the authors provided counterexamples that prove that their category of hypergroups also lacks coproducts and is not preadditive (refer to Theorems 4.20 and 4.23). It is important to note that our categories of hypergroups are distinct, as they have different morphisms. In the future, we plan to investigate whether their counterexamples can be applied to our case.

% NÃO ESQUECER DE PENSAR (no futuro) NA CATEGORIA DOS MÓDULOS LIVRES https://math.stackexchange.com/questions/2001/additive-category-that-is-not-abelian

\section{Acknowledgments}

This study was financed by the Coordena\c{c}\~ao de
Aperfei\c{c}oamento de Pessoal de N\'ivel Superior - Brasil (CAPES) - Finance Code 001. Ana Luiza Ten\'orio is funded by CAPES. Grant Number 88882.377949/2019-01.
We also thanks to professor Hugo L. Mariano for his valuable suggestions, and the reviewers for the effort to review the manuscript.

\bibliographystyle{plain}
\bibliography{one_for_all}

\end{document}